\numberwithin{equation}{section}
\newtheorem{Theorem}{Theorem}[section]
\newtheorem*{Theorem*}{Theorem}
\newtheorem{Corollary}[Theorem]{Corollary}
\newtheorem{Lemma}[Theorem]{Lemma}
\newtheorem{Proposition}[Theorem]{Proposition}
 { \theoremstyle{definition}

\newtheorem{Example}[Theorem]{Example}
\newtheorem{Remark}[Theorem]{Remark} }
\begin{document}
\allowdisplaybreaks

\renewcommand{\thefootnote}{}

\newcommand{\arXivNumber}{2212.07915}

\renewcommand{\PaperNumber}{028}

\FirstPageHeading

\ShortArticleName{CYT and SKT Metrics on Compact Semi-Simple Lie Groups}

\ArticleName{CYT and SKT Metrics on Compact Semi-Simple\\ Lie Groups\footnote{This paper is a~contribution to the Special Issue on Differential Geometry Inspired by Mathematical Physics in honor of Jean-Pierre Bourguignon for his 75th birthday. The~full collection is available at \href{https://www.emis.de/journals/SIGMA/Bourguignon.html}{https://www.emis.de/journals/SIGMA/Bourguignon.html}}}

\Author{Anna FINO~$^{\rm ab}$ and Gueo GRANTCHAROV~$^{\rm b}$}

\AuthorNameForHeading{A.~Fino and G.~Grantcharov}

\Address{$^{\rm a)}$~Dipartimento di Matematica ``G. Peano'', Universit\`{a} degli studi di Torino,\\
\hphantom{$^{\rm a)}$}~Via Carlo Alberto 10, 10123 Torino, Italy}
\EmailD{\href{mailto:annamaria.fino@unito.it}{annamaria.fino@unito.it}}

\Address{$^{\rm b)}$~Department of Mathematics and Statistics, Florida International University,\\
\hphantom{$^{\rm b)}$}~Miami, FL 33199, USA}
\EmailD{\href{mailto:afino@fiu.edu}{afino@fiu.edu}, \href{mailto:grantchg@fiu.edu}{grantchg@fiu.edu}}

\ArticleDates{Received January 02, 2023, in final form May 11, 2023; Published online May 25, 2023}

\Abstract{A Hermitian metric on a complex manifold $(M, I)$ of complex dimension $n$ is called Calabi--Yau with torsion (CYT) or Bismut--Ricci flat, if the restricted holonomy of the associated Bismut connection is contained in ${\rm SU}(n)$ and it is called strong K\"ahler with torsion (SKT) or pluriclosed if the associated fundamental form $F$ is $\partial \overline \partial$-closed. In the paper we study the existence of left-invariant SKT and CYT metrics on compact semi-simple Lie groups endowed with a Samelson complex structure $I$. In particular, we show that if $I$ is determined by some maximal torus $T$ and $g$ is a left-invariant Hermitian metric, which is also invariant under the right action of the torus $T$, and is both CYT and SKT, then $g$ has to be Bismut flat.}

\Keywords{Bismut connection; Hermitian metric}

\Classification{53C55; 53C05; 22E25; 53C30; 53C44}

\begin{flushright}
\begin{minipage}{65mm}
\it Dedicated to Jean-Pierre Bourguignon\\ for his 75th birthday
\end{minipage}
\end{flushright}

\renewcommand{\thefootnote}{\arabic{footnote}}
\setcounter{footnote}{0}

\section{Introduction}

On every Hermitian manifold $(M, I, g)$ there exists a one-parameter family of Hermitian connections which can be distinguished by their torsion tensor and coincide with the Levi-Civita connection when $M$ is K\"ahler. Among them there are
the Chern connection $\nabla^{\rm C}$ on the holomorphic tangent bundle and the Bismut (or Strominger) connection $\nabla^{\rm B}$, which is also called by physicists the K\"ahler with torsion (KT) connection. $\nabla^{\rm B}$ is the unique Hermitian connection whose torsion tensor is totally skew-symmetric and its torsion $T^{\rm B}$ is characterized by the condition
\[
g \big(T^{\rm B}(X,Y), Z\big) = {\rm d}F(IX,IY,IZ),
\]
where $F$ is the fundamental form $F(X,Y) = g(IX,Y)$, and $X$, $Y$, $Z$ are smooth vector fields. Since $\nabla^{\rm B}$ is a Hermitian connection, its holonomy is contained in the unitary group~${\rm U}(n)$, where~$n$ is the complex dimension of $M$. If the holonomy of $\nabla^{\rm B}$ is reduced to ${\rm SU}(n)$, the metric~$g$
is said to be Calabi--Yau with torsion (shortly CYT). It is also called {\it Bismut Ricci-flat}.
This type of geometry in the physical context was considered first by Strominger~\cite{Strominger} and
Hull~\cite{Hull}. In~\cite{GIP}, it was conjectured that every compact complex manifold with vanishing first Chern class admits a~CYT metric. Counterexamples to this conjecture appear in~\cite{FG}. In~\cite{Grantcharov} it has been shown that every compact complex homogeneous space with vanishing first Chern class, after an appropriate deformation of the complex structure, admits a homogeneous CYT metric, provided that the complex homogeneous space also has an invariant volume form.

{\em SKT} metrics, which are called also {\em pluriclosed}, are Hermitian metrics whose fundamental form $F$ is $\partial \overline \partial$-closed or equivalently, whose torsion $3$-form of the Bismut connection is closed. Interest in SKT manifolds stems from various sources~\cite{Swann}. They occur in physics in the context of supersymmetric theories, see for example \cite{GHR,HP,Strominger}. Secondly, every conformal class of a~hermitian metric on a~compact complex surface contains an SKT metric \cite{Gau}, but this property is no longer true in higher dimensions. Recently the SKT and CYT metrics have been studied in relation to the pluriclosed flow and the generalized Ricci flow \cite{AFSU, Barbaro, GJS, GS}.

Basic examples of SKT manifolds are given by compact Lie groups endowed with a bi-invariant metric and any of the compatible Samelson’s complex structures \cite{Samelson}. These SKT metrics are also CYT, but the associated Bismut connection is flat. Note that a simply connected compact Riemannian manifold admitting a flat metric connection with closed skew symmetric torsion is isometric to a product of compact simple Lie groups with bi-invariant metrics (see~\cite[Theorem~3.54]{GS}),
however it is currently not known whether there are other metrics, which are both SKT and CYT, but with non-flat Bismut connection. Many of the CYT examples in \cite{Grantcharov} have non-vanishing curvature of the Bismut connection. Recently in~\cite{PR, PR2} non-Hermitian examples which are Bismut Ricci-flat, which have closed torsion form and non-vanishing Bismut curvature have been constructed on compact homogeneous spaces.

By \cite{Samelson}, every compact Lie group G of even dimension admits a complex structure such that left translations are holomorphic mappings and the complex structure is determined by a~choice of a maximal torus $T$. This result is an extension of Borel's theorem which states that the quotient of a compact Lie group by its maximal torus always has a homogeneous complex structure.
Note that the Samelson’s construction depends on a choice of the maximal torus and such complex structures are compatible if they are compatible with the metric restricted to this torus.

More generally, the existence of SKT metrics on the class of Wang C-spaces, which are defined as compact complex manifolds admitting a transitive action by a compact Lie group of biholomorphisms and finite fundamental group, was investigated in \cite{FGV}. By \cite{FGV}, it turns out that SKT structures could only appear on a product of a compact Lie group and a K\"ahler homogeneous C-space (generalized flag manifold). Note that these spaces admit left-invariant SKT metrics.

Therefore, in view of the above, it is natural to investigate whether on an even dimensional compact Lie group with a Samelson complex structure there exist left-invariant metrics which are both SKT and CYT, but not Bismut flat.

Motivated by this question, in the current paper we study left-invariant SKT and CYT metrics on compact semi-simple Lie groups endowed with a Samelson's complex structure $I$. Using a~symmetrization, we show that the existence of left-invariant SKT or CYT metrics leads to an existence of such metrics which are left-invariant and right $T$-invariant. As a first consequence we show that if $g$ is a left-invariant SKT metric compatible with $I$, then the complex structure~$I$ has to be compatible with a bi-invariant metric $g_0$. Then we provide characterizations of left-invariant SKT and CYT metrics which are also invariant under the right $T$-action. As a consequence, we prove that on every compact semi-simple Lie group a metric of this type which is both CYT and SKT must be a bi-invariant one, or equivalently a product of metrics proportional to the Killing forms on the simple factors. Therefore, in particular, such metric must be Bismut flat. We also construct an explicit $5$-parameter family of SKT metrics on ${\rm SO}(9)$, which are $T$-invariant for the natural choice of the torus $T$.

\section{Preliminaries on compact semi-simple Lie groups}
It was shown independently by Samelson \cite{Samelson} and Wang \cite{Wang} that every compact Lie group $G$ of even dimension admits left-invariant complex structures $I$.
Moreover, there exists a maximal toral subgroup $T$ of $G$ that is $I$-complex and the quotient $G/T$
is a complete flag variety of $G$ with a complex structure such that the quotient map $\pi\colon G \rightarrow G/T$ is holomorphic.

We now review the Samelson's construction of left-invariant complex structures on compact semi-simple Lie groups.
We will follow the conventions of Alekseevsky--Perelomov \cite{AlP}, and Bordemann--Forger--Romer \cite{BFR}.
 Let $G^{\rm c}$ be the complexification of a compact semi-simple Lie group $G$. Then $G$ may be regarded as a Lie subgroup of $G^{\rm c}$, and the Lie algebra of $G^{\rm c}$ is the complexification $\mathfrak g^{\rm c}$ of the Lie algebra $\mathfrak g$ of $G$.

If we fix a maximal abelian subalgebra $\mathfrak t$ of $\mathfrak g$, then $\mathfrak h= \mathfrak t^{\rm c}$ is a maximal abelian subalgebra of~$\mathfrak g^{\rm c}$ and we have the root space decomposition
\[
\mathfrak g^{\rm c} = \mathfrak{h} \oplus \bigoplus_{\alpha \in \Phi} \mathfrak g_{\alpha},
\]
where $\Phi$ is the finite subset of nonzero elements of ${\mathfrak h}^*$ called roots, and each $\mathfrak g_{\alpha}$ is the complex $1$-dimensional subspace of $\mathfrak g^{\rm c} $ given by
\[
\mathfrak g_{\alpha} := \{ X \in {\mathfrak g}^{\rm c} \mid [H , X] = \alpha (H) X, \, \forall H \in \mathfrak{h} \}.
\]

Therefore, there exists a system of positive roots, that is a set $P \subset \Phi$ satisfying $P \cap (- P) = \varnothing$, $ P \cup (-P)=\Phi$, and $\alpha, \beta \in P$, $ \alpha + \beta \in \Phi$ $\Rightarrow$ $\alpha + \beta \in P$, and such that ${\rm {span}}_{\mathbb C}(P) = \mathfrak h$. We note that $\Phi$ is also invariant under a set of reflections which leads to the classification of all irreducible root systems. Moreover, the structure of $\Phi$ determines the structure of $\mathfrak g^{\rm c}$ in a sense that $[\mathfrak g_{\alpha}, \mathfrak g_{\beta}] \subseteq \mathfrak g_{\alpha +\beta}$ when $\alpha+\beta \in \Phi$, $[\mathfrak h,\mathfrak g_{\alpha}] = \mathfrak g_{\alpha}$ and $[\mathfrak g_{\alpha}, \mathfrak g_{-\alpha}] \subseteq \mathfrak h$ and the other commutators vanish.

Among the roots in $P$ there is a set of simple roots, which we denote by $\alpha_1, \alpha_2,\dots ,\alpha_r$ and forms a basis of the space $\mathfrak t^*$. For any $\alpha \in P$, we have $\alpha = \sum_{i =1}^r m_i\alpha_i$, with $m_i$ being a non-negative integer. The basis $(\alpha_l)$ also determines a partial order of the set of positive roots and~$r$ is called rank of $\mathfrak g^{\rm c}$.

Denote by $B(X,Y) = {\rm tr} ({\rm ad}_X \circ {\rm ad}_Y)$ the Killing--Cartan bilinear form of $\mathfrak g^{\rm c}$. The restriction of $B$ to $\mathfrak t$ is non-degenerate and $\mathfrak g_{\alpha}$ is $B$-orthogonal to $\mathfrak g_{\beta}$ for $\beta \neq - \alpha$.
 For every $\alpha \in P$ we choose $(E_{\pm \alpha}, H_{\alpha})$ such that $\mathfrak g_{\pm \alpha}= {\rm {span}}_{\mathbb C} \langle E_{\pm\alpha}\rangle$, $[E_{\alpha}, E_{-\alpha}]=H_{\alpha}$, $B(E_{\alpha}, E_{-\alpha}) = 1$, $[H, E_{\alpha}] = \alpha(H)E_{\alpha}$ and $B(H, H_{\alpha}) = \alpha(H)$ for all $H \in \mathfrak t$.

Note that $\mathfrak t = {\rm span}_{\mathbb R} \langle {\rm i} H_{\alpha}\rangle$ is the Lie algebra of $T$. Moreover, since all compact forms of a~complex semi-simple Lie algebra are conjugate, we can assume that $\mathfrak g$ is the real span of $X_{\alpha}$,~$Y_{\alpha}$,~${\rm i}H_{\alpha}$, where $X_{\alpha} = \frac{1}{\sqrt{2}}(E_{\alpha}+E_{-\alpha}), Y_{\alpha} = \frac{1}{\sqrt{2}{\rm i}}(E_{\alpha}-E_{-\alpha}).$
In particular, $\{ X_{\alpha}, Y_{\alpha} \} $ is an orthonormal set with respect to $- B$ on $G$.

When the rank $r$ is even, Samelson proved that a choice of $P$ and of a complex structure on~$\mathfrak t $ determine a left-invariant complex structure $I$ on $G$, which is given by $I(E_{\alpha})= {\rm i}E_{\alpha}$, $I(E_{-\alpha})=-{\rm i}E_{-\alpha}$, for every $\alpha \in P$, and by a linear endomorphism of $\mathfrak{t}$ with square $-{{\rm Id}}$.

Note that if $ \mathfrak g = \sum_{i =1}^m \mathfrak g_i$, where $\mathfrak g_i$ are simple, then $\mathfrak t= \sum_{i=1}^m \mathfrak t_i$ and $B = \sum_{i=1}^m B_i$. We denote by $E_{\alpha}^{(i)}$, $E_{-\alpha}^{(i)}$ the corresponding root vectors in $\mathfrak g_i$ with respect to $\mathfrak t_i$ and by $P_i$ the set of positive roots of the simple factor $G_i$ with respect to $\mathfrak t_i$. We will not use the upper index when there is no confusion.

\begin{Remark} \label{remarkAkekseevski} Let $I$ be a Samelson complex structure on $G$. If $h$ is a left-invariant $I$-Hermitian metric on $G$ which is also invariant by the right $T$-action on $G$, then there exists a metric on $G/T$ such that $\pi\colon G\rightarrow G/T$ is a Riemannian submersion. Since the spaces $\langle X_{\alpha}, Y_{\alpha}\rangle$ are $T$-invariant and irreducible,
the metric $h$ can be written, up to a constant, as $h(X,Y)= -B (\Lambda(X),Y)$, where~$\Lambda$ is
a symmetric operator defined by $\Lambda(E_{\alpha})=\lambda_{\alpha} E_{\alpha}$ for every $\alpha \in P$ and any
appropriate matrix on $ \mathfrak t$ such that $h$ is positive definite (see, e.g., \cite{Arv}).
\end{Remark}

\section{Invariant SKT metrics}

 We can apply the symmetrization with respect to the right $T$-action to prove that if we have a~left-invariant SKT $I$-Hermitian metric,
then it is possible to construct a SKT metric which is right $T$-invariant.

\begin{Lemma}\label{llemma1}
Let $G$ be a compact semi-simple Lie group of even rank endowed with a Samelson complex structure $I$. Let $g$ be a left-invariant SKT $I$-Hermitian metric.
Then symmetrizing~$g$ by the right $T$-action on $G$, we obtain a Hermitian metric $h$ which is still SKT and also left-invariant and right $T$-invariant.
\end{Lemma}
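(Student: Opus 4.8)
The plan is to perform an averaging (symmetrization) of the metric $g$ over the compact torus $T$ acting on $G$ on the right, and to check that all the relevant properties — being Hermitian with respect to $I$, being left-invariant, and being SKT — are preserved under this operation. Concretely, I would define
\[
h(X,Y) = \int_T g\big(\mathrm{d}R_t(X), \mathrm{d}R_t(Y)\big)\, \mathrm{d}\mu(t),
\]
where $R_t$ denotes right translation by $t\in T$ and $\mathrm{d}\mu$ is the normalized Haar measure on $T$. Since each $R_t$ is a diffeomorphism of $G$ and $g$ is positive definite, $h$ is again a Riemannian metric, and it is manifestly right $T$-invariant by invariance of the Haar measure.

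First I would record that $h$ is still left-invariant: left translations $L_a$ commute with right translations $R_t$, so $\mathrm{d}L_a^*h = \int_T \mathrm{d}L_a^*\mathrm{d}R_t^* g\, \mathrm{d}\mu = \int_T \mathrm{d}R_t^*\mathrm{d}L_a^* g\, \mathrm{d}\mu = h$, using $L_a^*g = g$. Next, $h$ is $I$-Hermitian: here the key point is that the Samelson complex structure $I$ is bi-invariant under $T$ — indeed $I$ is built from the root space decomposition associated to the maximal torus $T$, so $\mathrm{d}R_t$ commutes with $I$ for every $t\in T$ (equivalently, $\mathrm{Ad}_t$ preserves each $\mathfrak g_\alpha$ and acts on $\mathfrak t$ by the identity). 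Hence each integrand $g(\mathrm{d}R_t I\,\cdot\,, \mathrm{d}R_t I\,\cdot\,) = g(I\,\mathrm{d}R_t\,\cdot\,, I\,\mathrm{d}R_t\,\cdot\,) = g(\mathrm{d}R_t\,\cdot\,,\mathrm{d}R_t\,\cdot\,)$, so $h(IX,IY)=h(X,Y)$.

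The main point is the SKT condition. If $F$ and $F_h$ denote the fundamental forms of $g$ and $h$, then since $I$ is $R_t$-invariant one gets $F_h = \int_T R_t^* F\, \mathrm{d}\mu$, i.e. symmetrization of the fundamental $2$-form. Because pullback commutes with $\partial$, $\overline\partial$ and $\mathrm{d}$, and because one may differentiate under the integral sign (the integrand depends smoothly on $t$ and $T$ is compact), we obtain $\partial\overline\partial F_h = \int_T R_t^*(\partial\overline\partial F)\, \mathrm{d}\mu = 0$. Equivalently, in terms of the Bismut torsion $3$-form $H_h = \mathrm{d}^c F_h = \int_T R_t^* H\, \mathrm{d}\mu$, closedness of $H$ gives closedness of $H_h$. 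This shows $h$ is SKT.

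The only genuine subtlety — and the step I would be most careful about — is the interchange of the exterior derivative with the integral over $T$ and the invariance of the Samelson $I$ under $R_t$; both are true but deserve explicit mention. For the former, since $G$ is compact and the map $T\times G\to \Lambda^\bullet T^*G$ sending $(t,p)\mapsto (R_t^*F)_p$ is smooth, differentiation under the integral sign is justified. For the latter, one uses that $T$ is abelian and that $I$ was constructed precisely from the $T$-root decomposition, so $\mathrm{Ad}(T)$ preserves $I$; hence the averaged object is again $I$-compatible rather than merely a Riemannian metric. With these two facts in hand, the lemma follows.
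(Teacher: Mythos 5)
Your proposal is correct and follows essentially the same route as the paper: average $g$ over the right $T$-action using Haar measure, note that right $T$-translations are holomorphic for the Samelson structure (which the paper attributes to Wang and you justify directly via the root-space decomposition), and use that ${\rm d}$, ${\rm d}^{\rm c}$ commute with $R_t^*$ together with linearity of the SKT condition. The extra care you take with differentiation under the integral sign and with the $\operatorname{Ad}(T)$-invariance of $I$ is sound but does not change the argument.
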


\begin{proof} Since $I$ is invariant under the right $T$-action by the aforementioned result of Wang \cite{Wang}, we can symmetrize $g$. Indeed, let $d \nu$ be the standard volume on the torus $T$ and denote by $R_h$ the right translation by $h \in T$ on $G$. We can define
\[
 \overline{g}_x ({\bf u}, {\bf v}) := \int_T (R_h^* g)_x ({\bf u}, {\bf v}) \, {\rm d} \nu (h),
\]
for every $x \in G, {\bf u}, {\bf v} \in T_x G$. By construction $\overline{g}$ is invariant by the right $T$-action on $G$ and compatible with $I$. Moreover, since left and right translations commute and right $T$- translations are holomorphic, $\overline{g}$ is left-invariant.

Since the SKT condition ${\rm d}{\rm d}^{\rm c} \omega= 0$ is linear in $\omega$ and ${\rm d}$, ${\rm d}^{\rm c}$ commute with $R_h^*$, for every $ h \in T$, $\overline{g}$ is SKT. \end{proof}

Given a Samelson complex structure $I$, we can describe all possible left-invariant SKT $I$-Hermitian metrics which are also invariant by the right $T$-action on $G$.

\begin{Theorem}\label{lemma2}
Let $G$ be a compact semi-simple Lie group of even rank endowed with a Samelson complex structure $I$ and let $g$ be a left-invariant SKT $I$-Hermitian metric.
Then the restriction of $g$ on the fibers of $\pi\colon G \rightarrow G/T$ coincides with the restriction of a bi-invariant metric on $G$. As a consequence, the complex structure $I$ has to be compatible with a bi-invariant metric $g_0$.
\end{Theorem}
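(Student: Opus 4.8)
The plan is to reduce everything to a computation on a single simple factor and then pin down the SKT condition on the fiber $T$ using the explicit root-space description from Section~2. By Lemma~\ref{llemma1}, after symmetrizing we may assume $g$ is both left-invariant and right $T$-invariant, so by Remark~\ref{remarkAkekseevski} it has the form $g(X,Y) = -B(\Lambda X, Y)$ with $\Lambda E_\alpha = \lambda_\alpha E_\alpha$ for $\alpha \in P$ and $\Lambda|_{\mathfrak t}$ some symmetric positive operator compatible with $I|_{\mathfrak t}$. Since $B = \sum_i B_i$ and the complex structure respects the decomposition $\mathfrak g = \bigoplus_i \mathfrak g_i$, I would first observe that the SKT condition decouples over the simple factors (the torsion $3$-form and its differential are block-diagonal), so it suffices to treat $G$ simple.

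Next I would write out the Bismut torsion form $H = -{\rm d}^{\rm c} F$ explicitly in the basis $\{E_\alpha, E_{-\alpha}, {\rm i}H_\alpha\}$. Using the structure constants $[\mathfrak g_\alpha, \mathfrak g_\beta] \subseteq \mathfrak g_{\alpha+\beta}$ and the fact that $F(E_\alpha, E_{-\alpha})$ is proportional to $\lambda_\alpha$, one gets that $H$ is a sum of terms indexed by triples of roots $\alpha + \beta + \gamma = 0$ with coefficients that are linear combinations of the $\lambda$'s, plus terms pairing $\mathfrak t$ with root vectors. The key step is then to impose ${\rm d}H = 0$ and extract the component of ${\rm d}H$ lying in $\Lambda^2(\mathfrak t^*) \otimes \Lambda^2(\text{root part})$ or, more precisely, the components that ``see'' the restriction of $g$ to $\mathfrak t$. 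Because $\mathfrak t$ is abelian, the only way $\mathfrak t$-directions enter ${\rm d}H$ is through brackets $[{\rm i}H_\alpha, E_\beta] = {\rm i}\beta(H_\alpha) E_\beta$, and I expect that closedness forces the inner products $-B(\Lambda \cdot, \cdot)$ among the ${\rm i}H_\alpha$ to match those of $-B$ itself up to the overall scale already absorbed — i.e.\ $\Lambda|_{\mathfrak t}$ is a scalar multiple of the identity.

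The cleanest route to that conclusion is probably to exploit the Riemannian submersion $\pi\colon G \to G/T$ (which exists by Remark~\ref{remarkAkekseevski}) together with the known fact that $G/T$ is a flag manifold: the O'Neill/submersion formulas relate ${\rm d}F$ on $G$ to the curvature of the torus bundle and the second fundamental form of the fibers, and the SKT equation constrains the fiber metric via the "vertical" part of ${\rm d}{\rm d}^{\rm c}F$. Concretely, I would isolate the $(2,2)$-component of ${\rm d}{\rm d}^{\rm c}F$ supported on $\Lambda^2 \mathfrak t^* \wedge \Lambda^2(\mathfrak g/\mathfrak t)^*$ and show it vanishes only when the bilinear form $g|_{\mathfrak t}$ is proportional to $-B|_{\mathfrak t}$; this uses that the roots span $\mathfrak t^*$ and that for a simple Lie algebra the only ${\rm ad}(T)$-invariant quadratic form built symmetrically from all the root weights is a multiple of the Killing form. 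Once $g|_{\mathfrak t} = c\,(-B|_{\mathfrak t})$, the restriction of $g$ to each fiber agrees with the restriction of the bi-invariant metric $c\,(-B)$, which is the first claim; and since $I$ was assumed to restrict to a compatible complex structure on $\mathfrak t$, it is then compatible with this bi-invariant $g_0 = c\,(-B)$, giving the second claim.

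The main obstacle I anticipate is the bookkeeping in the second paragraph: organizing the many root-triple terms in ${\rm d}{\rm d}^{\rm c}F$ so that the purely toral constraint cleanly separates out, and verifying the algebraic fact that invariance plus the span condition on roots forces proportionality to $B$ on $\mathfrak t$ (for non-simply-laced algebras one must be careful that different root lengths do not give extra freedom — but the ${\rm ad}$-invariance, equivalently Weyl-invariance, of the quadratic form $g|_{\mathfrak t}$ rules this out, since the Weyl group acts irreducibly on $\mathfrak t$ for $G$ simple).
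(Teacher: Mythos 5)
There is a genuine gap in the middle of your argument: the component of ${\rm d}{\rm d}^{\rm c}F$ you propose to isolate carries no information. For a left-invariant, right $T$-invariant Hermitian metric the fundamental form splits as $F=\sum_l\theta_{2l-1}\wedge\theta_{2l}+\pi^*(F_b)$ with $(\theta_i)$ a $g$-orthonormal basis of $\mathfrak t^*$, and by the formula from \cite{GGP} that the paper relies on, ${\rm d}{\rm d}^{\rm c}F=\pi^*{\rm d}{\rm d}^{\rm c}F_b-\sum_i({\rm d}\theta_i)^2$, which is a purely horizontal (indeed basic) $4$-form. Hence the component of ${\rm d}{\rm d}^{\rm c}F$ in $\Lambda^2\mathfrak t^*\wedge\Lambda^2(\mathfrak g/\mathfrak t)^*$ vanishes identically, for every choice of $\Lambda$, and extracting it yields $0=0$. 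The toral metric enters only through the horizontal equation $\pi^*{\rm d}{\rm d}^{\rm c}F_b=\sum_i({\rm d}\theta_i)^2$, where it is coupled to the $\lambda_\alpha$'s, so there is no ``purely toral constraint'' to separate out pointwise. The decoupling in the paper is cohomological: since ${\rm d}{\rm d}^{\rm c}F_b$ is exact on $G/T$, the SKT condition forces $\sum_i[{\rm d}\theta_i]^2=0$ in $H^4(G/T,\mathbb R)$, and by Chevalley's description of $H^*(G/T)$ \cite{Chevalley} the only quadratic relations among the degree-two generators are $\sum_ic_i\sum_j\big[{\rm d}\widetilde\theta_j^{(i)}\big]^2=0$ for $-B_i$-orthonormal bases of the $\mathfrak t_i^*$; comparing this with the relation satisfied by the $g$-orthonormal $\theta_i$ gives $QQ^t=D$ and hence $g|_{\mathfrak t}=-\sum_i\nu_iB_i|_{\mathfrak t_i}$. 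Your appeal to ``${\rm ad}(T)$-invariance'' of $g|_{\mathfrak t}$ is vacuous ($T$ acts trivially on $\mathfrak t$), and Weyl-invariance of $g|_{\mathfrak t}$ is not a hypothesis you may invoke --- it is exactly the conclusion that the cohomological argument delivers.

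Second, the opening reduction to $G$ simple is unjustified: a Samelson structure only requires $I|_{\mathfrak t}$ to be some complex structure on $\mathfrak t$, and neither $I|_{\mathfrak t}$ nor $g|_{\mathfrak t}$ need respect the splitting $\mathfrak t=\bigoplus_i\mathfrak t_i$ (on ${\rm SU}(2)\times{\rm SU}(2)$, for instance, $I$ is forced to interchange the two one-dimensional tori). Consequently ${\rm d}F$ has cross terms of the form $F([X,Y],Z)=g(I[X,Y],Z)$ with $[X,Y]\in\mathfrak t_1$ and $Z\in\mathfrak t_2$, and the torsion is not block-diagonal. Block-diagonality of $g|_{\mathfrak t}$, with possibly different constants $\nu_i$ on the factors, is part of what must be proved, so the reduction silently assumes a piece of the conclusion. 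The symmetrization step and the final deduction (compatibility of $I$ with $g_0=-\sum_i\nu_iB_i$ once $g|_{\mathfrak t}$ is identified) are fine.
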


\begin{proof} By Lemma \ref{llemma1}, using the symmetrization it is not restrictive to suppose that $g$ is invariant under the right $T$-action.
 Suppose that $G$ up to a finite cover is given by the product $G_1\times G_2\times \dots \times G_m$ of simple Lie groups $G_i$ ($ \dim G_i > 1$). Then $\mathfrak t = \sum_{i = 1}^m \mathfrak t_i$, with $\mathfrak t_i$ the Cartan subalgebra of the Lie algebra $\mathfrak g_i$ of every factor $G_i$.

Select a $g$-unitary basis $(\theta_1, \dots , \theta_{2k})$ of $\mathfrak t^*$, such that $ I(\theta_{2l-1})=\theta_{2l}$.
Then by \cite{GGP}, we have $F = \sum_{l = 1}^k \theta_{2l-1}\wedge\theta_{2l} + \pi^*(F_b),$ where $F_b$ is a $(1,1)$-form of an invariant Hermitian structure on~$G/T$. Also by \cite[Section~5]{GGP},
\[
{\rm d}{\rm d}^{\rm c} F = \pi^*{\rm d}{\rm d}^{\rm c} F_b - \sum_{l = 1}^k \big(\omega_{2l-1}^2+\omega_{2l}^2\big),
\]
 where $\omega_i = {\rm d}\theta_i$ are the curvature forms of the connection on $\pi\colon G\rightarrow G/T$ determined by $\theta_i$. This in particular means that
 \[
 \sum_{l=1}^k \big([\omega_{2l-1}]^2+[\omega_{2l}]^2\big) = \sum_{i=1}^{2k} [{\rm d}\theta_i]^2 = 0
 \]
 in the cohomology $H^2(G/T, \mathbb{R})$.

Now, the flag manifold $G/T$ decomposes as $G/T = G_1/T_1\times\dots \times G_m/T_m$ and by the results of Chevalley \cite{Chevalley}, $H^2(G_i/T_i, \mathbb{R})$ is generated by the (exterior derivatives of the) simple roots in~$\mathfrak t_i^*$, and moreover, there is a unique quadratic relation among them. This quadratic relation can be written as $\sum_{j=1}^{s_i} \big[{\rm d}\widetilde{\theta}^{(i)}_j\big]^2 = 0$ for
a $-B_i$-orthonormal basis $\big(\widetilde{\theta}^{(i)}_1, \dots, \widetilde{\theta}^{(i)}_{s_i}\big)$ of $\mathfrak t_i^*$, which can be easily checked by direct calculations. Since $B =\sum_{i =1}^m B_i$, the basis is also $-B$-orthonormal.

It follows that the only quadratic relations among the cohomology classes in $H^2(G/T, \mathbb{R})$ are given by
\begin{equation}\label{quadraticrel}
 \sum_{i=1}^m c_i\sum_{j=1}^{s_i} \big[{\rm d}\widetilde{\theta}^{(i)}_j\big]^2 = 0, \end{equation} for some constants $c_1, \dots, c_m$.

Then
\begin{equation} \label{quadrelation}
\sum_{i=1}^{2k} [{\rm d}\theta_i]^2 = \sum_{i=1}^m c_i\sum_{j=1}^{s_i} \big[{\rm d}\widetilde{\theta}^{(i)}_j\big]^2.
\end{equation}

Denote the $-B$-orthonormal basis $\big(\widetilde{\theta}_1^{(1)}, \dots, \widetilde{\theta}_{s_1}^{(1)}, \dots, \widetilde{\theta}_1^{(m)}, \dots, \widetilde{\theta}_{s_m}^{(m)}\big)$ by $\sigma_l$ for $l=1,2,\dots ,2k$. Then $\theta_i = \sum_{l=1}^{2k} q_{li}\sigma_l$ for a transition matrix $Q = (q_{ij})$
and we can rewrite $\sum_{i=1}^{2k} [{\rm d}\theta_i]^2$ as
\[
\sum_{i=1}^{2k} [{\rm d}\theta_i]^2 = \sum_{i = 1}^{2k} \sum_{l,j =1}^{2k} q_{li} q_{ji} [{\rm d}\sigma_l \wedge {\rm d} \sigma_j].
\]
 On the other hand,
\[
 \sum_{i=1}^m c_i\sum_{j=1}^{s_i} \big[{\rm d}\widetilde{\theta}^{(i)}_j\big]^2 = c_1 \sum_{i =1}^{s_1} [{\rm d} \sigma_i]^2 + c_2 \sum_{i =1}^{s_2} [{\rm d} \sigma_{s_1 +i}]^2 + \dots + c_m \sum_{i =1}^{s_m} [{\rm d} \sigma_{s_1 + \dots + s_{m-1} + i} ]^2.
\]
Therefore, from the equality \eqref{quadrelation} we have
\[
 \sum_{i = 1}^{2k} \sum_{l,j =1}^{2k} q_{li} q_{ji} [{\rm d} \sigma_l \wedge {\rm d} \sigma_j] = c_1 \sum_{i =1}^{s_1} [{\rm d} \sigma_i]^2 + c_2 \sum_{i =1}^{s_2} [{\rm d} \sigma_{s_1 +i}]^2 + \dots + c_m \sum_{i =1}^{s_m} [{\rm d} \sigma_{s_1 + \dots + s_{m-1} + i} ]^2.
\]
Using that $[{\rm d}\sigma_l\wedge {\rm d}\sigma_j]$ are independent, for every $l\neq j$ we get that
\[
\sum_{i = 1}^{2k} q_{li} q_{ji} =0, \qquad l \neq j,
\]
and by \eqref{quadraticrel}
\[
\sum_{i =1}^{s_1} (q_{li})^2 = c_1, \quad \dots, \quad \sum_{i =s_{m-1}}^{s_m} (q_{li})^2 = c_m,
\]
i.e., that $QQ^t= D$, where $D$ is a diagonal matrix with entries $c_i$. From here we conclude, that~$c_i > 0$ and $Q$ preserves the metric $-\sum_{i = 1}^m c_i B_i$. In particular $\theta_i$ are $-\sum_{i=1}^m c_i B_i$-or\-thonor\-mal. As the $\theta_i$s are $g$- and $-\sum_{i=1}^m c_i B_i$-orthonormal, the restriction of $g$ to $\mathfrak t$ coincides with the restriction of a biinvariant metric, the theorem follows.
\end{proof}

\begin{Remark} It is well known that on an even dimensional compact Lie group equipped with a~bi-invariant metric $g_0$ there is always a $g_0$-compatible complex structure, which is necessary SKT and also Bismut-flat. But there exist Samelson's complex structures which are not compatible with any bi-invariant metric on $G$ as long as the rank of $G$ is at least 2.
\end{Remark}

As a consequence of Theorem \ref{lemma2}, we have the following

\begin{Corollary} \label{th4.1bis}
Let $G$ be a compact semi-simple Lie group with Samelson's complex structure $I$ as above. Let $g$ be a left-invariant $I$-Hermitian metric on $G$, which is also invariant under the right action of the torus $T$ in the Tits fibration $G\rightarrow G/T$, where $T$ is a maximal torus in $G$. We can write the fundamental form $F$ of the metric $g$
as $F = F|_{\mathfrak t} + \sum_{i=1}^m \sum_{\alpha \in P_i} \lambda_{\alpha} ^{(i)} e_{\alpha}^{(i)} \wedge I e_{\alpha}^{(i)}$, where $e_{\alpha}^{(i)}$, $Ie_{\alpha}^{(i)}$ are dual to $X_{\alpha}^{(i)}, Y_{\alpha}^{(i)}$ and $\lambda_{\alpha} ^{(i)}= g (E_{\alpha}^{(i)}, E_{-\alpha}^{(i)})$. Then
 $g$ is SKT iff $g |_{\mathfrak t} = - \sum_{i =1}^m \nu_i B|_{\mathfrak t_i}$ and $ \sum_{i =1}^m \sum_{\alpha \in P_i} \big(\lambda_{\alpha}^{(i)} - \nu_i\big) e_{\alpha}^{(i)} \wedge I e_{\alpha}^{(i)}$ is a linear combination of the basis $\omega_j = {\rm d} \alpha_j$ of $H^2(G/T, \mathbb R)$, where $\alpha_j$ are the simple roots.

Equivalently, $g$ is SKT iff ${\rm d}\big( \sum_{i =1}^m \sum_{\alpha \in P_i} \big(\lambda_{\alpha}^{(i)} - \nu_i\big) e_{\alpha}^{(i)}\wedge I e_{\alpha}^{(i)}\big)=0$ and $\Lambda |_{\mathfrak t} = \sum_{i=1}^m \nu_i {\rm Id}|_{\mathfrak t_i}$, for some constants $\nu_i$, $i= 1,2, \dots, m$.
\end{Corollary}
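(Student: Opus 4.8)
The plan is to combine Theorem~\ref{lemma2} with a comparison against a bi-invariant metric, reducing the SKT condition to a statement on the flag manifold $G/T$. Since $g$ is left-invariant, right $T$-invariant and $I$-Hermitian, it is block diagonal for $\mathfrak g=\mathfrak t\oplus\mathfrak m$ with $\mathfrak m=\bigoplus_i\bigoplus_{\alpha\in P_i}\langle X_\alpha^{(i)},Y_\alpha^{(i)}\rangle$, and $I$ preserves both blocks; so, by Remark~\ref{remarkAkekseevski}, $F=F|_{\mathfrak t}+\sum_i\sum_{\alpha\in P_i}\lambda_\alpha^{(i)}\,e_\alpha^{(i)}\wedge Ie_\alpha^{(i)}$ with $\lambda_\alpha^{(i)}=g\big(E_\alpha^{(i)},E_{-\alpha}^{(i)}\big)$, the double sum being a basic $(1,1)$-form, i.e.\ the pullback $\pi^*F_b$ of a $G$-invariant $(1,1)$-form on $G/T$. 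If $g$ is SKT, then by Theorem~\ref{lemma2} $g|_{\mathfrak t}$ is the restriction of a bi-invariant metric, so $g|_{\mathfrak t}=-\sum_i\nu_i B|_{\mathfrak t_i}$ with $\nu_i>0$, equivalently $\Lambda|_{\mathfrak t}=\sum_i\nu_i\,{\rm Id}|_{\mathfrak t_i}$; this is the first of the two asserted conditions, and in the converse direction I would assume it.

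Assuming $\Lambda|_{\mathfrak t}=\sum_i\nu_i\,{\rm Id}|_{\mathfrak t_i}$, consider the bi-invariant metric $g_0:=-\sum_i\nu_i B_i$. It is $I$-Hermitian — on $\mathfrak m$ because $I$ rotates each $-B$-orthonormal plane $\langle X_\alpha,Y_\alpha\rangle$, and on $\mathfrak t$ because $g_0|_{\mathfrak t}=g|_{\mathfrak t}$ already is — and being bi-invariant it is Bismut flat, in particular SKT, so ${\rm d}{\rm d}^{\rm c}F_0=0$. Since $g$ and $g_0$ agree on $\mathfrak t$ and $I$ preserves $\mathfrak t$, the form $F-F_0$ is horizontal for $\pi$, and being left-invariant and right $T$-invariant it is basic: $F-F_0=\pi^*\eta_b$ for a real $G$-invariant $(1,1)$-form $\eta_b$ on $G/T$, and inspecting components gives
\[
F-F_0=\sum_i\sum_{\alpha\in P_i}\big(\lambda_\alpha^{(i)}-\nu_i\big)\,e_\alpha^{(i)}\wedge Ie_\alpha^{(i)}=:\eta=\pi^*\eta_b .
\]
Because $\pi$ is holomorphic, ${\rm d}{\rm d}^{\rm c}$ commutes with $\pi^*$, so ${\rm d}{\rm d}^{\rm c}F={\rm d}{\rm d}^{\rm c}F_0+\pi^*{\rm d}{\rm d}^{\rm c}\eta_b=\pi^*{\rm d}{\rm d}^{\rm c}\eta_b$; hence $g$ is SKT iff ${\rm d}{\rm d}^{\rm c}\eta_b=0$ on $G/T$.

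The key step is that for a $G$-invariant $(1,1)$-form $\eta_b$ on the flag manifold $G/T$ one has ${\rm d}{\rm d}^{\rm c}\eta_b=0\iff{\rm d}\eta_b=0$. The implication $\Leftarrow$ follows from ${\rm d}{\rm d}^{\rm c}=-{\rm d}^{\rm c}{\rm d}$. For $\Rightarrow$ I would use that $G/T$, with the complex structure induced by $I$, is a compact Kähler manifold (a generalized flag manifold): the $(2,1)$-form $\partial\eta_b$ is ${\rm d}$-closed, since ${\rm d}\,\partial\eta_b=-\partial\overline{\partial}\eta_b=0$, and $\partial$-exact, so the $\partial\overline{\partial}$-lemma yields $\partial\eta_b=\partial\overline{\partial}\xi$ for some $(1,0)$-form $\xi$. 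Averaging this identity over the left $G$-action — which is transitive and holomorphic on $G/T$ — and using that $\eta_b$ is $G$-invariant gives $\partial\eta_b=\partial\overline{\partial}\xi^G$ with $\xi^G$ a $G$-invariant $(1,0)$-form; but $G/T$ carries no nonzero $G$-invariant $1$-form, because the isotropy ${\rm Ad}(T)$ acts on each $\langle X_\alpha,Y_\alpha\rangle\subset\mathfrak g/\mathfrak t$ by a nontrivial rotation. Hence $\xi^G=0$, $\partial\eta_b=0$, and — $\eta_b$ being real — $\overline{\partial}\eta_b=0$ too, i.e.\ ${\rm d}\eta_b=0$. I expect this step — combining the $\partial\overline{\partial}$-lemma on the flag manifold with the averaging and the vanishing of invariant $1$-forms — to be the only genuine obstacle; the remainder is bookkeeping together with Theorem~\ref{lemma2}.

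It remains to match the condition ${\rm d}\eta=0$ with membership $\eta\in{\rm span}\{\omega_j\}$, where $\omega_j={\rm d}\alpha_j$. Every $G$-invariant exact $2$-form on $G/T$ is, after averaging a primitive, the differential of a $G$-invariant $1$-form, hence zero; so the de Rham class map is injective on $G$-invariant closed $2$-forms. By Chevalley's theorem (\cite{Chevalley}), applied to each factor, $H^2(G/T,\mathbb R)=\bigoplus_i H^2(G_i/T_i,\mathbb R)$ is spanned by the $r=\sum_i s_i$ classes $[{\rm d}\alpha_j]$ of the simple roots, which are realized by the $G$-invariant closed $2$-forms $\omega_j={\rm d}\alpha_j$; comparing dimensions, the space of $G$-invariant closed $2$-forms is exactly ${\rm span}\{\omega_j\}$. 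Therefore ${\rm d}\eta=0\iff\eta\in{\rm span}\{\omega_j\}$, so the two displayed characterizations coincide. Finally, for the converse: if $\Lambda|_{\mathfrak t}=\sum_i\nu_i\,{\rm Id}|_{\mathfrak t_i}$ and $\eta\in{\rm span}\{\omega_j\}$, then ${\rm d}\eta=0$, hence ${\rm d}{\rm d}^{\rm c}F=\pi^*{\rm d}{\rm d}^{\rm c}\eta_b=-\pi^*{\rm d}^{\rm c}{\rm d}\eta_b=0$, so $g$ is SKT. This closes the equivalence.
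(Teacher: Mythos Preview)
Your proof is correct and follows the same overall structure as the paper's: use Theorem~\ref{lemma2} to pin down $g|_{\mathfrak t}$, compare $F$ with the fundamental form $F_0$ of the bi-invariant metric $-\sum_i\nu_i B_i$, and reduce the SKT condition to a statement about the basic $(1,1)$-form $\eta_b$ on $G/T$. The one substantive difference is in the key step, namely showing that a $G$-invariant ${\rm d}{\rm d}^{\rm c}$-closed $(1,1)$-form on the flag manifold is already $d$-closed. The paper dispatches this in a single line by invoking the isomorphism between Aeppli and Dolbeault cohomology on the K\"ahler manifold $G/T$; you instead spell it out via the $\partial\overline\partial$-lemma, averaging over $G$, and the vanishing of $G$-invariant $1$-forms on $G/T$. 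Your route is more explicit and arguably more complete: the Aeppli--Dolbeault isomorphism, as stated, only produces a $d$-closed \emph{representative} of the Aeppli class of $\eta_b$, and one still needs $G$-invariance (precisely your averaging argument) to conclude that $\eta_b$ itself is $d$-closed. You also make explicit the final identification ${\rm d}\eta_b=0\iff\eta\in{\rm span}\{\omega_j\}$ via injectivity of the de~Rham map on invariant closed $2$-forms, which the paper leaves implicit.
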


\begin{proof} By \cite{GGP}, we can rewrite $F = \sum_{l = 1}^k \mu_l^2 \theta_{2l-1}\wedge\theta_{2l} + \pi^*(F_b),$
where $F_b$ is a (1,1)-form of an invariant Hermitian structure on $G/T$ and $(\theta_i)$ is a $(-B)$-orthonormal basis of $\mathfrak t^*= \sum_{i =1}^m \mathfrak t_i^*$, which is also $g$-orthogonal. Note that $\theta_i$ are linear combinations of the simple roots $\alpha_i$ and
\[
F |_{\mathfrak t} = \sum_{l = 1}^k \mu_l^2 \theta_{2l-1}\wedge\theta_{2l}, \qquad \pi^*(F_b) = \sum_{i =1}^m \sum_{\alpha \in P_i} \lambda_{\alpha}^{(i)} e_{\alpha}^{(i)} \wedge I e_{\alpha}^{(i)},
\]
where $P_i$ denotes the set of positive roots of the simple factor $G_i$ with respect to $\mathfrak t_i$.
Suppose that $g |_{\mathfrak t} = - \sum_{i=1}^m \nu_i B|_{\mathfrak t_i}$ and $ \sum_{i =1}^m \sum_{\alpha \in P_i} \big(\lambda_{\alpha}^{(i)} - \nu_i\big) e_{\alpha}^{(i)} \wedge I e_{\alpha}^{(i)}$ is a linear combination of the basis $\omega_j = {\rm d} \alpha_j$ of $H^2(G/T, \mathbb R)$. Using that every $I$-Hermitian bi-invariant metric is SKT, we have that ${\rm d}{\rm d}^{\rm c} \big(F |_{\mathfrak t} + \nu_i e_{\alpha}^{(i)} \wedge I e_{\alpha}^{(i)}\big) =0$.
Since ${\rm d}\big(\sum_{i =1}^m \sum_{\alpha \in P_i } \big(\lambda_{\alpha}^{(i)} - \nu_i\big) e_{\alpha}^{(i)} \wedge I e_{\alpha}^{(i)}\big)=0$, then it follows that $g$ is SKT.

Conversely, if $g$ is SKT, then by Theorem~\ref{lemma2}, we have that $I$ is compatible with $- \sum_{i =1}^m \nu_i B_i$ and $g |_{\mathfrak t} = - \sum_{i =1}^m \nu_i B_i |_{\mathfrak t_i}$, for some positive $\nu_i$. This implies that $\Lambda$ is of the form $ \sum_{i=1}^m \nu_i {\rm Id}|_{\mathfrak t_i}$. Then, the difference between $F$ and the fundamental form associated to the metric $- \sum_{i =1}^m \nu_i B_i$ is given by $ \sum_{i =1}^m \sum_{\alpha \in P_i } \big(\lambda_{\alpha}^{(i)} - \nu_i\big) e_{\alpha}^{(i)} \wedge I e_{\alpha}^{(i)}$.
As a consequence $ \sum_{i =1}^m \sum_{\alpha \in P_i} \big(\lambda_{\alpha}^{(i)} - \nu_i\big) e_{\alpha}^{(i)} \wedge I e_{\alpha}^{(i)}$ is ${\rm d} {\rm d}^{\rm c}$-closed.

Since $ \sum_{i =1}^m \sum_{\alpha \in P_i} \big(\lambda_{\alpha}^{(i)} - \nu_i\big) e_{\alpha}^{(i)} \wedge I e_{\alpha}^{(i)}$ is the pull-back by $\pi$ of a ${\rm d} {\rm d}^{\rm c}$-closed $(1,1)$-form on~$G/T$ and the Aepply cohomology of $G/T$ is isomorphic to the Dolbeault cohomology, we have that $ \sum_{i =1}^m \sum_{\alpha \in P_i} \big(\lambda_{\alpha}^{(i)} - \nu_i\big) e_{\alpha}^{(i)} \wedge I e_{\alpha}^{(i)}$ is $d$-closed, and so $ \sum_{i =1}^m \sum_{\alpha \in P_i} \big(\lambda_{\alpha}^{(i)} - \nu_i\big) e_{\alpha}^{(i)} \wedge I e_{\alpha}^{(i)}$ is a~linear combination of $ \omega_i = {\rm d} \alpha_i$.
\end{proof}

\begin{Example} We will now construct an explicit $5$-parameter family of SKT metrics on the real compact Lie group ${\rm SO}(9)$ compatible with a Samelson complex structure. Recall that the complexification of ${\rm SO}(9)$ has rank 4 and its Lie algebra corresponds to $B_4$. Denote by $E_{ij}$ the matrix in ${\mathfrak {so}}(9, \mathbb C)$, whose unique non-zero entry is 1 at the place $(i,j)$. Let fix the maximal abelian subalgebra $\mathfrak t$ of ${\mathfrak {so}} (9)$ whose complexification $\mathfrak h= \mathfrak t ^{\rm c} = \operatorname{span} \langle H_1, H_2, H_3, H_4\rangle$, where
\begin{gather*}
H_1 = {\rm i} (E_{12} - E_{21}), \qquad\! H_2 = {\rm i} (E_{34} - E_{43}), \qquad\! H_3 = {\rm i} (E_{56} - E_{65}), \qquad\! H_4 = {\rm i} (E_{78} - E_{87}).\!
\end{gather*}
The weights for the fundamental representation of the Cartan subalgebra are $\pm f^k$, where
\[
f^1 = (1,0,0,0), \quad\dots,\quad f^4 = (0,0,0,1),
\]
 and the roots are given by $\pm f^j \pm f^k$, for $k \neq j$ and $\pm f^j$. In particular, the set $P$ of positive roots is the set consisting of $ f^j \pm f^k ,$ for $ j < k,$ and $f^j$ (see, for instance, \cite{St} for more details). We can choose a basis $(E_{\pm \alpha}, H_{\alpha})_{\alpha \in P}$ such that
 \[
 \mathfrak g_{\pm \alpha}= {\rm {span}}_{\mathbb C} \langle E_{\pm\alpha}\rangle, \qquad [E_{\alpha}, E_{-\alpha}]=H_{\alpha}, \qquad B(E_{\alpha}, E_{-\alpha}) = 1, \qquad [H, E_{\alpha}] = \alpha(H)E_{\alpha}
 \]
 and $B(H, H_{\alpha}) = \alpha(H)$ for all $H \in \mathfrak t$. The vectors $(E_{ \pm \alpha})$ satisfying the previous conditions are given by
\begin{gather*}
E_1^+ = \frac{\sqrt{2}}{2} (E_{19} + {\rm i} E_{29} - E_{91} - {\rm i} E_{92}), \qquad E_1^- = \frac{\sqrt{2}}{2} (E_{19} - {\rm i} E_{29} - E_{91}+ {\rm i} E_{92}),\\
E_2^+ = \frac{\sqrt{2}}{2} (E_{39} + {\rm i} E_{49} - E_{93} - {\rm i} E_{94}), \qquad E_2^- = \frac{\sqrt{2}}{2} (E_{39} - {\rm i} E_{49} - E_{93} + {\rm i} E_{94}),\\
E_3^+ = \frac{\sqrt{2}}{2} (E_{59} + {\rm i} E_{69} - E_{95} - {\rm i} E_{96}), \qquad E_3^- = \frac{\sqrt{2}}{2} (E_{59} - {\rm i} E_{69} - E_{95} + {\rm i} E_{96}),\\
E_4^+ = \frac{\sqrt{2}}{2} (E_{79} + {\rm i} E_{89} - E_{97} - {\rm i} E_{98}), \qquad E_4^- = \frac{\sqrt{2}}{2} (E_{79} - {\rm i} E_{89} - E_{97} + {\rm i} E_{98}),\\
[E_1^+, E_2^+], \qquad [E_1^+, E_3^+], \qquad [E_1^+, E_4^+], \qquad [E_2^+, E_3^+], \qquad [E_2^+, E_4^+], \qquad [E_3^+, E_4^+],\\
[E_1^-, E_2^-], \qquad [E_1^-, E_3^-], \qquad [E_1^-, E_4^-], \qquad [E_2^-, E_3^-], \qquad [E_2^-, E_4^-], \qquad [E_3^-, E_4^-],\\
[E_1^+, E_2^-], \qquad [E_1^+, E_3^-], \qquad [E_1^+, E_4^-], \qquad [E_2^+, E_1^-], \qquad [E_2^+, E_3^-], \qquad [E_2^+, E_4^-], \\
 [E_3^+, E_1^-], \qquad [E_3^+, E_2^-], \qquad [E_3^+, E_4^-], \qquad [E_4^+, E_1^-], \qquad [E_4^+, E_2^-], \qquad [E_4^+, E_3^-].
\end{gather*}
Therefore, we have
$\mathfrak t = \operatorname{span}\langle {\rm i}H_1, {\rm i}H_2, {\rm i}H_3, {\rm i}H_4\rangle$ and ${\mathfrak{so}}(9)$ is the real span of $(X_k, Y_k, {\rm i}H_j)$, with $k = 1, \dots, 16$, $j = 1, \dots, 4$,
and
\begin{gather*}
 {\rm i}H_1 = - (E_{12} - E_{21}), \qquad {\rm i}H_2 = -(E_{34} - E_{43}),\\
 {\rm i}H_3 = - (E_{56} - E_{65}), \qquad {\rm i}H_4 =- (E_{78} - E_{87}),\\
 X_1 = E_{19} - E_{91}, \qquad Y_1 = E_{29} - E_{92}, \qquad X_2 = E_{39} - E_{93}, \qquad Y_2 = E_{49} - E_{94},\\
 X_3 = E_{59} - E_{95}, \qquad Y_3 = E_{69} - E_{96}, \qquad X_4 = E_{79} - E_{97}, \qquad Y_4 = E_{89} - E_{98},\\
 X_5 = \frac{\sqrt{2}}{2} (-E_{13} + E_{24} + E_{31} - E_{42}), \qquad Y_5 = \frac{\sqrt{2}}{2} (-E_{14} - E_{23} + E_{41} + E_{32}),\\
 X_6 = \frac{\sqrt{2}}{2} (-E_{15} + E_{26} + E_{51} - E_{62}), \qquad Y_6 = \frac{\sqrt{2}}{2} (-E_{16} - E_{25} + E_{61} + E_{52}),\\
 X_7 = \frac{\sqrt{2}}{2} (-E_{17} + E_{28} + E_{71} - E_{82}), \qquad Y_7 = \frac{\sqrt{2}}{2} (-E_{18} - E_{27} + E_{81} + E_{72}),\\
 X_8 = \frac{\sqrt{2}}{2} (-E_{35} + E_{46} + E_{53} - E_{64}), \qquad Y_8 = \frac{\sqrt{2}}{2} (-E_{36} - E_{45} + E_{63} + E_{54}),\\
 X_9 = \frac{\sqrt{2}}{2} (-E_{37} + E_{48} + E_{73} - E_{84}), \qquad Y_9 = \frac{\sqrt{2}}{2} (-E_{38} - E_{47} + E_{83} + E_{74}),\\
 X_{10} = \frac{\sqrt{2}}{2} (-E_{57} + E_{68} + E_{75} - E_{86}), \qquad Y_{10} = \frac{\sqrt{2}}{2} (-E_{58} - E_{67} + E_{85} + E_{76}),\\
 X_{11} = \frac{\sqrt{2}}{2} (-E_{13} - E_{24} + E_{31} + E_{42}), \qquad Y_{11} = \frac{\sqrt{2}}{2} (E_{14} - E_{23} - E_{41} + E_{32}),\\
 X_{12} = \frac{\sqrt{2}}{2} (-E_{15} - E_{26} + E_{51} + E_{62}), \qquad Y_{12} = \frac{\sqrt{2}}{2} (E_{16} - E_{25} - E_{61} + E_{52}),\\
 X_{13} = \frac{\sqrt{2}}{2} (-E_{17} - E_{28} + E_{71} + E_{82}), \qquad Y_{13} = \frac{\sqrt{2}}{2} (E_{18} - E_{27} - E_{81} + E_{72}),
\\
 X_{14} = \frac{\sqrt{2}}{2} (-E_{35} - E_{46} + E_{53} + E_{64}), \qquad Y_{14} = \frac{\sqrt{2}}{2} (E_{36} - E_{45} - E_{63} + E_{54}),\\
 X_{15} = \frac{\sqrt{2}}{2} (-E_{37} - E_{48} + E_{73} + E_{84}), \qquad Y_{15} = \frac{\sqrt{2}}{2} (E_{38} - E_{47} - E_{83} + E_{74}),\\
 X_{16} = \frac{\sqrt{2}}{2} (-E_{57} - E_{68} + E_{75} + E_{86}), \qquad Y_{16} = \frac{\sqrt{2}}{2} (E_{58} - E_{67} - E_{85} + E_{76}).
\end{gather*}
If we denote by $I$ the Samelson complex structure defined by $I H_1= H_2$, $I H_3 = H_4$ and by the choice of positive roots, i.e., by $I X_j= Y_j$, $j = 1, \dots, 16$, we get that $({\rm SO}(9), I)$ has the following complex structure equations
\begin{gather*}
{\rm d} \varphi_1 = \frac{{\rm i}}{2} ( \varphi_3 \wedge \overline \varphi_3 + \varphi_7 \wedge \overline \varphi_7 + \varphi_8 \wedge \overline \varphi_8 + \varphi_9 \wedge \overline \varphi_9 + \varphi_{13} \wedge \overline \varphi_{13} + \varphi_{14} \wedge \overline \varphi_{14} +\varphi_{15} \wedge \overline \varphi_{15})\\
 \hphantom{{\rm d} \varphi_1 =}{} - \frac{1}{2} ( \varphi_4 \wedge \overline \varphi_4 + \varphi_7 \wedge \overline \varphi_7 + \varphi_{10} \wedge \overline \varphi_{10} + \varphi_{11} \wedge \overline \varphi_{11} - \varphi_{13} \wedge \overline \varphi_{13} +\varphi_{16} \wedge \overline \varphi_{16}\\
\hphantom{{\rm d} \varphi_1 =\frac{{\rm i}}{2} ( }{}
 +\varphi_{17} \wedge \overline \varphi_{17}),\\
{\rm d} \varphi_2 = \frac{{\rm i}}{2} ( \varphi_5 \wedge \overline \varphi_5 + \varphi_8 \wedge \overline \varphi_8 + \varphi_{10} \wedge \overline \varphi_{10} + \varphi_{12} \wedge \overline \varphi_{12} - \varphi_{14} \wedge \overline \varphi_{14} - \varphi_{16} \wedge \overline \varphi_{16} +\varphi_{18} \wedge \overline \varphi_{18})\\
\hphantom{{\rm d} \varphi_2 =}{}
 - \frac{1}{2} ( \varphi_6 \wedge \overline \varphi_6 + \varphi_9 \wedge \overline \varphi_9 + \varphi_{11} \wedge \overline \varphi_{11} + \varphi_{12} \wedge \overline \varphi_{12} - \varphi_{15} \wedge \overline \varphi_{15} -\varphi_{17} \wedge \overline \varphi_{17}\\
\hphantom{{\rm d} \varphi_2 =\frac{{\rm i}}{2} (}{}
 -\varphi_{18} \wedge \overline \varphi_{18}),\\
{\rm d} \varphi_3 = \frac{{\rm i}}{2} (\varphi_1 \wedge \varphi_3+ \overline \varphi_1 \wedge \varphi_3) - \frac{\sqrt{2}}{2} ( \overline \varphi_4 \wedge \varphi_7 + \varphi_4 \wedge \varphi_{13} + \overline \varphi_5 \wedge \varphi_8 + \varphi_5 \wedge \varphi_{14} )\\
\hphantom{{\rm d} \varphi_{9} =}{} - \frac{\sqrt{2}}{2} (\overline \varphi_6 \wedge \varphi_9 + \varphi_6 \wedge \varphi_{15}),\\
{\rm d} \varphi_4 = \frac{1}{2} (\varphi_1 \wedge \varphi_4 - \overline \varphi_1 \wedge \varphi_4)+ \frac{\sqrt{2}}{2} (\overline \varphi_3 \wedge \varphi_7 + \varphi_3 \wedge \overline \varphi_{13} - \overline \varphi_5 \wedge \varphi_{10} - \varphi_5 \wedge \varphi_{16} )\\
\hphantom{{\rm d} \varphi_{9} =}{} - \frac{\sqrt{2}}{2} (\varphi_6 \wedge \varphi_{17} + \overline \varphi_6 \wedge \varphi_{11})\\
{\rm d} \varphi_5 = \frac{{\rm i}}{2} (\varphi_2 \wedge \varphi_5+ \overline \varphi_2 \wedge \varphi_5) + \frac{\sqrt{2}}{2} ( \overline \varphi_3 \wedge \varphi_8 + \varphi_3 \wedge \overline \varphi_{14} + \overline \varphi_4 \wedge \varphi_{10} + \varphi_4 \wedge \overline \varphi_{16} )\\
\hphantom{{\rm d} \varphi_{9} =}{} - \frac{\sqrt{2}}{2} (\overline \varphi_6 \wedge \varphi_{12} + \varphi_6 \wedge \varphi_{18}),\\
{\rm d} \varphi_6 = \frac{1}{2} (\varphi_2 \wedge \varphi_6- \overline \varphi_2 \wedge \varphi_6) + \frac{\sqrt{2}}{2} ( \overline \varphi_3 \wedge \varphi_9 + \varphi_3 \wedge \overline \varphi_{15} + \overline \varphi_4 \wedge \varphi_{11} + \varphi_4 \wedge \overline \varphi_{17} )\\
\hphantom{{\rm d} \varphi_{9} =}{} + \frac{\sqrt{2}}{2} (\overline \varphi_5 \wedge \varphi_{12} + \varphi_5 \wedge \overline \varphi_{18}),\\
{\rm d} \varphi_7 = \frac{1}{2} (\varphi_1 \wedge \varphi_7 - \overline \varphi_1 \wedge \varphi_7) + \frac{{\rm i}}{2} ( \varphi_1 \wedge \varphi_7 + \overline \varphi_1 \wedge \varphi_7)\\
\hphantom{{\rm d} \varphi_{9} =}{} - \frac{\sqrt{2}}{2} (\varphi_3 \wedge \varphi_4 + \varphi_8 \wedge \varphi_{16} + \varphi_9 \wedge \varphi_{17} ) + \frac{\sqrt{2}}{2} (\varphi_{10} \wedge \varphi_{14} + \varphi_{11} \wedge \varphi_{15}),\\
{\rm d} \varphi_8 = \frac{{\rm i}}{2} (\varphi_1 \wedge \varphi_8 + \overline \varphi_1 \wedge \varphi_8) + \frac{{\rm i}}{2} (\varphi_2 \wedge \varphi_8 + \overline \varphi_2 \wedge \varphi_8)\\
\hphantom{{\rm d} \varphi_{9} =}{} - \frac{\sqrt{2}}{2} (\varphi_3 \wedge \varphi_5 - \varphi_7 \wedge \overline \varphi_{16} + \varphi_9 \wedge \varphi_{18} ) - \frac{\sqrt{2}}{2} (\varphi_{10} \wedge \varphi_{13} - \varphi_{12} \wedge \varphi_{15}),\\
{\rm d} \varphi_9 = \frac{{\rm i}}{2} (\varphi_1 \wedge \varphi_9 + \overline \varphi_1 \wedge \varphi_9) + \frac{1}{2} (\varphi_2 \wedge \varphi_9 - \overline \varphi_2 \wedge \varphi_9)\\
\hphantom{{\rm d} \varphi_{9} =}{} - \frac{\sqrt{2}}{2} (\varphi_3 \wedge \varphi_6 - \varphi_7 \wedge \overline \varphi_{17} - \varphi_8 \wedge \overline \varphi_{18} ) - \frac{\sqrt{2}}{2} (\varphi_{11} \wedge \varphi_{13} +\varphi_{12} \wedge \varphi_{14}),\\
{\rm d} \varphi_{10} = \frac{1}{2} (\varphi_1 \wedge \varphi_{10} - \overline \varphi_1 \wedge \varphi_{10}) + \frac{{\rm i}}{2} (\varphi_2 \wedge \varphi_{10} + \overline \varphi_2 \wedge \varphi_{10})\\
\hphantom{{\rm d} \varphi_{18} =}{} - \frac{\sqrt{2}}{2} (\varphi_4 \wedge \varphi_5 + \varphi_7 \wedge \overline \varphi_{14} - \varphi_8 \wedge \overline \varphi_{13} ) - \frac{\sqrt{2}}{2} (\varphi_{11} \wedge \varphi_{18} - \varphi_{12} \wedge \varphi_{17}),\\
{\rm d} \varphi_{11} = \frac{1}{2} (\varphi_1 \wedge \varphi_{11} - \overline \varphi_1 \wedge \varphi_{11}) + \frac{1}{2} (\varphi_2 \wedge \varphi_{11} - \overline \varphi_2 \wedge \varphi_{11}) \\
\hphantom{{\rm d} \varphi_{18} =}{} - \frac{\sqrt{2}}{2} (\varphi_4 \wedge \varphi_6 + \varphi_7 \wedge \overline \varphi_{15} - \varphi_9 \wedge \varphi_{13} ) + \frac{\sqrt{2}}{2} (\varphi_{10} \wedge \overline \varphi_{18} - \varphi_{12} \wedge \varphi_{16}),\\
{\rm d} \varphi_{12} = \frac{{\rm i}}{2} (\varphi_2 \wedge \varphi_{12} + \overline \varphi_2 \wedge \varphi_{12}) + \frac{1}{2} (\varphi_2 \wedge \varphi_{12} - \overline \varphi_2 \wedge \varphi_{12}) \\
\hphantom{{\rm d} \varphi_{18} =}{} - \frac{\sqrt{2}}{2} (\varphi_5 \wedge \varphi_6 + \varphi_8 \wedge \overline \varphi_{15} - \varphi_9 \wedge \overline \varphi_{14} ) - \frac{\sqrt{2}}{2} (\varphi_{10} \wedge \overline \varphi_{17} - \varphi_{11} \wedge \overline \varphi_{16}),\\
{\rm d} \varphi_{13} = \frac{{\rm i}}{2} (\varphi_1\wedge \varphi_{13} + \overline \varphi_1 \wedge \varphi_{13}) - \frac{1}{2} (\varphi_1 \wedge \varphi_{13} - \overline \varphi_1 \wedge \varphi_{13})\\
\hphantom{{\rm d} \varphi_{18} =}{} - \frac{\sqrt{2}}{2} (\varphi_3 \wedge \overline \varphi_4 + \varphi_8 \wedge \overline \varphi_{10} + \varphi_9 \wedge \overline \varphi_{11} ) - \frac{\sqrt{2}}{2} (\varphi_{14} \wedge \overline \varphi_{16} + \varphi_{15} \wedge \overline \varphi_{17}),
\\
{\rm d} \varphi_{14} = \frac{{\rm i}}{2} (\varphi_1 \wedge \varphi_{14} + \overline \varphi_1 \wedge \varphi_{14})- \frac{{\rm i}}{2} (\varphi_2 \wedge \varphi_{14} +\overline \varphi_2 \wedge \varphi_{14}) \\
\hphantom{{\rm d} \varphi_{18} =}{} - \frac{\sqrt{2}}{2} (\varphi_3 \wedge \overline \varphi_5 - \varphi_7 \wedge \overline \varphi_{10} + \varphi_9 \wedge \overline \varphi_{12} ) +\frac{\sqrt{2}}{2} (\varphi_{13} \wedge \varphi_{16} - \varphi_{15} \wedge \overline \varphi_{18}),\\
{\rm d} \varphi_{15} = \frac{{\rm i}}{2} (\varphi_1 \wedge \varphi_{15} + \overline \varphi_1 \wedge \varphi_{15}) - \frac{1}{2} (\varphi_2 \wedge \varphi_{15} - \overline \varphi_2 \wedge \varphi_{15})\\
\hphantom{{\rm d} \varphi_{18} =}{} - \frac{\sqrt{2}}{2} (\varphi_3 \wedge \overline \varphi_6 - \varphi_7 \wedge \overline \varphi_{11} - \varphi_8 \wedge \overline \varphi_{12} ) + \frac{\sqrt{2}}{2} (\varphi_{13} \wedge \varphi_{17} + \varphi_{14} \wedge \varphi_{18}),\\
{\rm d} \varphi_{16} = \frac{1}{2} (\varphi_1 \wedge \varphi_{16} - \overline \varphi_1 \wedge \varphi_{16}) - \frac{{\rm i}}{2} (\varphi_2 \wedge \varphi_{16} + \overline \varphi_2 \wedge \varphi_{16})\\
\hphantom{{\rm d} \varphi_{18} =}{} - \frac{\sqrt{2}}{2} (\varphi_4 \wedge \overline \varphi_5 + \varphi_7 \wedge \overline \varphi_{8} + \varphi_{11} \wedge \overline \varphi_{12} ) - \frac{\sqrt{2}}{2} ( \overline \varphi_{13} \wedge \overline \varphi_{14} + \varphi_{17} \wedge \overline \varphi_{18}),\\
{\rm d} \varphi_{17} = \frac{1}{2} (\varphi_1 \wedge \varphi_{17} - \overline \varphi_1 \wedge \varphi_{17}) - \frac{1}{2} (\varphi_2 \wedge \varphi_{17} - \overline \varphi_2 \wedge \varphi_{17}) \\
\hphantom{{\rm d} \varphi_{18} =}{}- \frac{\sqrt{2}}{2} (\varphi_4 \wedge \overline \varphi_6 + \varphi_7 \wedge \overline \varphi_{9} - \varphi_{10} \wedge \overline \varphi_{12} ) - \frac{\sqrt{2}}{2} ( \overline \varphi_{13} \wedge \varphi_{15} - \varphi_{16} \wedge \varphi_{18}),\\
{\rm d} \varphi_{18} = \frac{{\rm i}}{2} (\varphi_2 \wedge \varphi_{18} + \overline \varphi_2 \wedge \varphi_{18}) - \frac{1}{2} (\varphi_2 \wedge \varphi_{18} - \overline \varphi_2 \wedge \varphi_{18}) \\
\hphantom{{\rm d} \varphi_{18} =}{}
- \frac{\sqrt{2}}{2} (\varphi_5 \wedge \overline \varphi_6 + \varphi_8 \wedge \overline \varphi_{9} +\varphi_{10} \wedge \overline \varphi_{11} )
 - \frac{\sqrt{2}}{2} ( \overline \varphi_{14} \wedge \varphi_{15} + \overline \varphi_{16} \wedge \varphi_{17}).
\end{gather*}
By a direct computation, we have that the left-invariant metric $F = \sum_{k =1}^{18} {\rm i}b_k \varphi_k \wedge \overline \varphi_k$, with $b_k > 0$ for every $k$, is SKT if and only if
\begin{gather*}
F = \frac{{\rm i}}{2} (b_{16} - b_{17} + b_{18}) (\varphi_1 \wedge \overline \varphi_1 + \varphi_2 \wedge \overline \varphi_2) + \frac{{\rm i}}{2} (b_6 +b_{15} - b_{16} + b_{17} - b_{18}) \varphi_3 \wedge \overline \varphi_3\\
\hphantom{F = }{} + \frac{{\rm i}}{2} (b_6 - b_{16} + 2 b_{17} - b_{18}) \varphi_4 \wedge \overline \varphi_4 + \frac{{\rm i}}{2} (b_6 + b_{17} - b_{16}) \varphi_5 \wedge \overline \varphi_5 + b_6 \varphi_6 \wedge \overline \varphi_6\\
\hphantom{F = }{} + \frac{{\rm i}}{2} (2 b_6 + b_{15} - 3 b_{16} + 4 b_{17} - 3 b_{18}) \varphi_7 \wedge \overline \varphi_7\!+ \frac{{\rm i}}{2} (2 b_6 + b_{15} -3 b_{16} - 2 b_{18} + 3 b_{17}) \varphi_8 \wedge \overline \varphi_8\!\\
\hphantom{F = }{} + \frac{{\rm i}}{2} ( 2 b_6 + b_{15} - 2 b_{16} + 2 b_{17} - 2 b_{18}) \varphi_9 \wedge \overline \varphi_9 + \frac{{\rm i}}{2} (2 b_6 - 3 b_{16} + 4 b_{17} - 2 b_{18}) \varphi_{10} \wedge \overline \varphi_{10}\\
\hphantom{F = }{} + \frac{{\rm i}}{2} (3 b_{17} - 2 b_{18} + 2 b_6 - 2 b_{16}) \varphi_{11} \wedge \overline \varphi_{11} + \frac{{\rm i}}{2} (2 b_6 - 2 b_{16} + 2 b_{17} - b_{18}) \varphi_{12} \wedge \overline \varphi_{12}\\
\hphantom{F = }{} + \frac{{\rm i}}{2} ( b_{15} + b_{16} -2 b_{17} +b_{18}) \varphi_{13} \wedge \overline \varphi_{13} + \frac{{\rm i}}{2} (b_{15} + b_{16} - b_{17}) \varphi_{14} \wedge \overline \varphi_{14}\\
\hphantom{F = }{} + b_{15} \varphi_{15} \wedge \overline \varphi_{15}+ b_{16} \varphi_{16} \wedge \overline \varphi_{16}+ b_{17} \varphi_{17} \wedge \overline \varphi_{17}+ b_{18} \varphi_{18} \wedge \overline \varphi_{18}.
\end{gather*}
So we obtain a $5$-parameter family of left-invariant $I$-Hermitian SKT metrics which are also right $T$-invariant. By the results in the next section (Corollary \ref{coroBismutflat}) this family describes all possible such metrics.
Moreover, $F = F_0$, where $F_0$ is the fundamental form associated to $- B$, if and only if $b_{15}= b_{16}= b_{17}= b_{18} = b_6 =1$. Since
\begin{gather*}
F = (b_{16} - b_{17} + b_{18}) F_0 + \frac{1}{2} (b_6 + b_{15} -2 b_{16} + 2 b_{17} -2 b_{18}) {\rm d} (\varphi_1 + \overline \varphi_1)\\[2pt]
\hphantom{F=}{} + \frac{{\rm i}}{2} (b_6 - 2b_{16} + 3 b_{17} - 2 b_{18}) {\rm d} (\overline \varphi_1 - \varphi_1)
+ \frac 12 (b_6 -2 b_{16} + 2 b_{17} - b_{18}]) {\rm d} ( \varphi_2 +\overline \varphi_2)\\[2pt]
\hphantom{F=}{} + \frac{1}{2} (b_6 - b_{16} + b_{17} - b_{18}) {\rm d} (\overline \varphi_2 - \varphi_2),
\end{gather*}
 the Aeppli class of $F$ coincides with the one of $F_0$. Note that by doing similar computations as in \cite{Pittie,Pittie2} (see also~\cite{LUV}) it is possible to describe the minimal model for the Dolbeault cohomology of $({\rm SO}(9), I)$. Calculation for the Aeppli cohomology of the compact simple Lie groups of rank two is given in~\cite{Barbaro}.
\end{Example}

\section{ Invariant CYT metrics}

We first mention an analog of Lemma \ref{llemma1} for CYT metrics on compact semi-simple Lie groups:

\begin{Proposition}
Let $I$ be a Samelson's complex structure on a compact semi-simple Lie group~$G$ and $g$ be a left-invariant CYT $I$-Hermitian metric. Then there is a left-invariant CYT $I$-Hermitian metric on $(G,I)$ which is also right $T$-invariant.
\end{Proposition}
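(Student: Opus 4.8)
The plan is to reduce the CYT condition to the vanishing of the first Bismut Ricci form and then repeat the averaging argument of Lemma~\ref{llemma1}. Recall that a Hermitian metric $g$ is CYT (that is, Bismut Ricci-flat) precisely when $\rho^{\rm B}_g=0$, where $\rho^{\rm B}_g$ is the curvature of the connection induced by $\nabla^{\rm B}$ on the anticanonical line bundle; and that $\rho^{\rm B}$ is natural, so that a holomorphic isometry $\phi\colon(G,I,g')\to(G,I,g)$ satisfies $\phi^*\rho^{\rm B}_g=\rho^{\rm B}_{g'}$. Since $I$ is invariant under the right $T$-action and the right translations $R_h$, $h\in T$, are holomorphic (both recalled above), $R_h\colon(G,I,R_h^*g)\to(G,I,g)$ is a holomorphic isometry, hence $\rho^{\rm B}_{R_h^*g}=R_h^*\rho^{\rm B}_g$; in particular, if $g$ is CYT then every $R_h^*g$ is CYT. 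As in the proof of Lemma~\ref{llemma1} I would then form the symmetrization
\[
\overline g_x({\bf u},{\bf v}):=\int_T (R_h^*g)_x({\bf u},{\bf v})\,{\rm d}\nu(h),
\]
which is $I$-Hermitian, right $T$-invariant, and left-invariant (left and right translations commute).

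The point that still needs an argument --- and that makes this more than a verbatim copy of Lemma~\ref{llemma1}, since $\rho^{\rm B}_g=0$ is not visibly linear in $g$ the way ${\rm d}{\rm d}^{\rm c}F=0$ is --- is that $\overline g$ is again CYT. For this I would show that, restricted to the (finite-dimensional) space of left-invariant $I$-Hermitian metrics on $G$, the map $g\mapsto\rho^{\rm B}_g$ is affine; I would in fact first check whether it is altogether independent of $g$, in which case there is nothing more to prove. A natural way to see this is to use the homogeneous formula $\nabla^{\rm B}=\nabla^{\rm LC}+\frac{1}{2}H$ with $H=-{\rm d}^{\rm c}F$, together with the fact --- parallel to the observation that the Chern--Ricci form is the same for all left-invariant Hermitian metrics, because the ratio of two left-invariant volume forms is constant and ${\rm d}{\rm d}^{\rm c}$ of its logarithm vanishes --- that $\rho^{\rm B}$ and the Chern--Ricci form differ by the differential of a $1$-form depending affinely on $g$. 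Granting this, naturality and the fact that ${\rm d}\nu$ is a probability measure give
\[
\rho^{\rm B}_{\overline g}=\int_T\rho^{\rm B}_{R_h^*g}\,{\rm d}\nu(h)=\int_T R_h^*\rho^{\rm B}_g\,{\rm d}\nu(h),
\]
which is $0$ when $g$ is CYT; hence $\overline g$ is a left-invariant, right $T$-invariant CYT metric.

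Equivalently, the above says that the left-invariant CYT metrics form the intersection of an affine subspace of the space of left-invariant $I$-Hermitian metrics with the positivity cone, which is a convex set invariant under the right $T$-action, and hence contains the $T$-average of any of its elements. The main obstacle is therefore the explicit (representation-theoretic) computation establishing the affine --- or, better, $g$-independent --- dependence of $\rho^{\rm B}_g$ on $g$ among left-invariant metrics; once that is in hand, the rest is the soft naturality-plus-averaging argument already used for Lemma~\ref{llemma1}.
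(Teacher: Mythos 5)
Your overall strategy (reduce the CYT condition to something affine, then average) is the right instinct, but the specific claim you propose to establish --- that $g\mapsto\rho^{\rm B}_g$ is affine, or even independent of $g$, on the space of left-invariant $I$-Hermitian metrics --- is false, and this is exactly where the difficulty of the proposition lives. The Chern--Ricci form is indeed metric-independent (Koszul: $\rho^{\rm Ch}=\frac{1}{2}{\rm i}\sum_{\alpha\in P}{\rm d}\alpha$), but the correction term ${\rm d}(\delta_g F_g)$ in $\rho^{\rm B}=\rho^{\rm Ch}+{\rm d}(\delta_g F_g)$ is genuinely nonlinear in $g$: for a right $T$-invariant metric with $g=-B(\Lambda\cdot,\cdot)$ and $g(E_\alpha,E_{-\alpha})=\lambda_\alpha$ one computes $\rho^{\rm B}=\frac{1}{2}{\rm i}\sum_{\alpha\in P}\bigl({\rm d}\alpha-\lambda_\alpha^{-1}{\rm d}(\alpha\circ\Lambda)\bigr)$, a homogeneous degree-zero rational function of the metric coefficients. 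Since $\rho^{\rm B}_{cg}=\rho^{\rm B}_g$ for all $c>0$, affineness would force $\rho^{\rm B}$ to be constant in $g$, which it is not (otherwise the CYT system \eqref{CYTsystem} would be vacuous). Consequently your identity $\rho^{\rm B}_{\overline g}=\int_T\rho^{\rm B}_{R_h^*g}\,{\rm d}\nu(h)$ does not follow, and the set of left-invariant CYT metrics is not the intersection of an affine subspace with the positivity cone, so the convexity argument at the end fails as well.

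The paper's proof repairs this by linearizing in a different variable. Using that a closed left-invariant $1$-form on a compact semi-simple Lie group vanishes, the condition $\rho^{\rm B}=0$ is equivalent to $\delta F=\sigma$ with $\sigma=-\frac{1}{2}{\rm i}\sum_{\alpha\in P}\alpha$ a \emph{fixed} left-invariant $1$-form, and via $\star_h F=-\frac{1}{(n-1)!}F^{n-1}$ this becomes ${\rm d}F^{n-1}=-I\sigma\wedge F^{n-1}$, which is linear in $F^{n-1}$. One then averages $F^{n-1}$ over the right $T$-action: the average of a positive $(n-1,n-1)$-form is again positive, hence equals $\tilde F^{n-1}$ for a unique right $T$-invariant positive $(1,1)$-form $\tilde F$, and since $\sigma$, $I\sigma\in\mathfrak t^*$ are fixed by ${\rm Ad}(T)$ the linear equation is preserved. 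Note that $\tilde F$ is \emph{not} the average $\overline F$ of $F$ (the $(n-1)$-st power does not commute with averaging), which is precisely why averaging the metric itself, as in Lemma~\ref{llemma1}, is not enough here. If you want to complete your argument, replace ``$\rho^{\rm B}_g$ is affine in $g$'' by ``the CYT condition is an affine condition on $F^{n-1}$'' and average in that variable.
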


\begin{proof}
We use the same idea as in \cite{FG} combined with the symmetrization of Lemma~\ref{llemma1}.
By Koszul~\cite{K},
the Ricci form of the Chern connection for every left-invariant $I$-Hermitian metric on $(G, I)$, is the exterior derivative of the half-sum of all positive roots, i.e.,
\[
\rho^{\rm Ch}= \frac{1}{2} {\rm i} \sum_{\alpha \in P} {\rm d} \alpha.
\]
Since the Bismut Ricci form $\rho^{\rm B}$ is related to the Chern Ricci form by the relation $\rho^{\rm B} = \rho^{\rm Ch} + {\rm d} (\delta F)$, where $F$ denotes the fundamental form associated to the Hermitian metric, to prove the proposition it is enough to find a right $T$-invariant $I$-Hermitian metric $h$ whose fundamental form~$F$ satisfies
\[
\delta F = - \frac{1}{2} {\rm i} \sum_{\alpha \in P} \alpha = \sigma,
\]
where $\delta = - \star_h d \star_h$ denotes the codifferential with respect to $h$ and $\sigma$ is a left-invariant $1$-form. This follows from the fact that ${\rm d} (\delta F - \sigma)=0$ leads to $\delta F = \sigma$ because on a compact semi-simple Lie group a closed left-invariant $1$-form has to be trivial.
Since $ \star_h F = - \frac{1}{(n -1)!} F^{n-1}$, where $2n$ is the real dimension of $G$, the previous condition is equivalent to ${\rm d} F^{n-1} = (n -1)! \star_h (\sigma) =- I\sigma\wedge F^{n-1}$.
Now if we symmetrize $F^{n-1}$
with respect to the right $T$-action as in Lemma~\ref{llemma1} we obtain a right $T$-invariant $(n-1,n-1)$-form which is also positive, so it is of the type $\tilde{F}^{n-1}$ for a right $T$-invariant form $\tilde{F}$. Since $\sigma$ and $I \sigma$ belong to $\mathfrak t^*$ and the ${\rm Ad}(T)$-action on $\mathfrak t$ is trivial, then ${\rm d} \tilde{F}^{n-1} = -I\sigma \wedge \tilde{F}^{n-1}$.
Since $F$ and $\tilde{F}$ coincide on $\mathfrak t$, the proof is completed.
\end{proof}

Let $g$ be a left-invariant $I$-Hermitian metric on $G$ which is also invariant by the right $T$-action. Then we can write $g (\cdot, \cdot ) = -B (\Lambda (\cdot), \cdot)$, where
$\Lambda$ is a positive-definite Hermitian matrix on $\mathfrak t$ and diagonal on $\sum_{\alpha} \mathfrak g_{\alpha}$.
In particular, $g (E_{\alpha}, E_{-\alpha}) = \lambda_{\alpha}$ and $g(E_{\alpha},E_{\beta}) =0$, for every $\alpha \neq - \beta$.

Let $F(\cdot, \cdot) = g (I \cdot, \cdot)$ be the fundamental form associated to the Hermitian structure $(I, g)$.
We can calculate the codifferential $\delta F$ using the formula in \cite{Gau}:
\begin{equation*}%\label{Gauduchonformula}
I \circ \delta F (X) = \frac 12 \sum_{l=1}^{n} {\rm d}F (e_l, I e_l, X),
\end{equation*}
where $n$ is the complex dimension of $G$ and $(e_i)$ is a $I$-adapted orthonormal basis of $(G, I, g)$, such that $I e_{2l -1} = e_{2l}$, $l = 1, \dots, n$. We can choose as $I$-adapted orthonormal basis $(e_i)$ of~$(G, I, g)$
\[
\left(\frac{X_{\alpha}}{ \sqrt{-\lambda_{\alpha}}},\frac{Y_{\alpha}}{ \sqrt{- \lambda_{\alpha}}}, H_k \right),
\]
where $(H_k)$ is an orthonormal basis of $\mathfrak t$ such that $I H_{2l -1} = H_{2l}$.
Because
\[
{\rm d}F(X,Y,Z)=F([X,Y],Z)+F([Y,Z],X)+F([Z,X],Y)
\]
 for every left-invariant $2$-form $F$ on $G$ and every $X,Y, Z \in \mathfrak g$, we get that the sum
 \[
 \sum_{l=1}^{n} {\rm d}F(e_l, Je_l, X) = \sum_{\alpha \in P} \frac{1}{\lambda_{\alpha}} {\rm d}F(X_{\alpha}, Y_{\alpha}, X) + \sum_{l =1}^{r/2} {\rm d}F( H_{2l -1}, H_{2l}, X)
 \]
 has
contributions only from
\begin{align*}
{\rm d}{\rm d}F(X_{\alpha}, Y_{\alpha}, Z) &= {\rm i}{\rm d}F{\rm d}(E_{\alpha},E_{-\alpha}, Z) = {\rm i}{\rm d}([E_{\alpha}, E_{-\alpha}], Z)\\
& = {\rm i}g (I ([E_{\alpha}, E_{-\alpha}]), Z) = - {\rm i}g( H_{\alpha}, IZ)\\
& = {\rm i}B (\Lambda( H_{\alpha}), IZ) = {\rm i}B (H_{\alpha}, \Lambda IZ) = {\rm i}\alpha (\Lambda I Z)
\end{align*}
 for every $Z\in \mathfrak{h}$.

Then we get
\[
\delta F \circ
I = - \frac {1} {2} {\rm i} \sum_{\alpha \in P} \frac{1} {\lambda_{\alpha}} (\alpha \circ \Lambda) \circ I.
\]
 By the above, the Ricci form $\rho^{\rm B}$ is given by
\[
\rho^{\rm B} = \frac{1}{2} {\rm i} \sum_{\alpha \in P} \bigg({\rm d}\alpha - \frac{1}{\lambda_{\alpha}} {\rm d} (\alpha\circ\Lambda) \bigg).
\]

Therefore $g$ is CYT if and only $\sum_{\alpha \in P} \big({\rm d}\alpha - \frac{1}{\lambda_{\alpha}} {\rm d}(\alpha\circ\Lambda) \big ) =0$. Using this characterization we have:

\begin{Theorem} \label{th4.1}
Let $G$ be a compact semi-simple Lie group with a Samelson's complex structure determined by the set $P$ of positive roots associated to a maximal torus $T$ of $G$. Let $g$ be a~left-invariant $I$-Hermitian metric on $G$, which is also invariant under the right action of the torus $T$ in the Tits fibration $G\rightarrow G/T$, where $T$ is a maximal torus in $G$. We can write the fundamental form $F$ of the metric $g$
as $F = F|_{\mathfrak t} + \sum_{i=1}^m \sum_{\alpha \in P_i} \lambda_{\alpha} ^{(i)} e_{\alpha}\wedge I e_{\alpha}^{(i)}$, where $e_{\alpha}^{(i)}$, $Ie_{\alpha}^{(i)}$ are dual to~$X_{\alpha}$,~$Y_{\alpha}$ and $\lambda_{\alpha} ^{(i)}= g (E_{\alpha}, E_{-\alpha})$. Then
$g$ is CYT iff $\sum_{\alpha \in P} \big({\rm d}\alpha - \frac{1}{\lambda_{\alpha}} {\rm d}(\alpha\circ\Lambda) \big) = 0$, where $\Lambda$ is the positive operator such that $g |_{\mathfrak t} (\cdot, \cdot ) = -B (\Lambda (\cdot), \cdot)$.
\end{Theorem}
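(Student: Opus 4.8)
The plan is to express the CYT condition as the vanishing of the Bismut Ricci form and then to record the computation of $\rho^{\rm B}$ already carried out above the statement; the theorem is essentially a repackaging of that discussion together with the normal form of $F$, so the only genuine work is the evaluation of the codifferential $\delta F$. First recall that, $\nabla^{\rm B}$ being a Hermitian connection, its restricted holonomy is contained in ${\rm SU}(n)$ exactly when the connection it induces on the anticanonical line bundle $\Lambda^{n}T^{1,0}G$ is flat; on the connected manifold $G$ this means that the curvature of that connection vanishes, i.e.\ $\rho^{\rm B}=0$. Hence $g$ is CYT if and only if $\rho^{\rm B}=0$, and since $\frac{1}{2}{\rm i}\neq 0$ it suffices to establish
\[
\rho^{\rm B}=\frac{1}{2}{\rm i}\sum_{\alpha\in P}\Big({\rm d}\alpha-\frac{1}{\lambda_{\alpha}}\,{\rm d}(\alpha\circ\Lambda)\Big),
\]
with $\lambda_{\alpha}=g(E_{\alpha},E_{-\alpha})$ and $\Lambda$ as in the statement. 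The decomposition $F=F|_{\mathfrak t}+\sum_{i}\sum_{\alpha\in P_i}\lambda_{\alpha}^{(i)}e_{\alpha}\wedge Ie_{\alpha}^{(i)}$ written in the statement is the one of Remark~\ref{remarkAkekseevski} and Corollary~\ref{th4.1bis}, and it is what identifies the $\lambda_{\alpha}$'s with the numbers $g(E_{\alpha},E_{-\alpha})$.

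To prove the displayed identity I would combine three ingredients. The first is the general relation $\rho^{\rm B}=\rho^{\rm Ch}+{\rm d}(\delta F)$ between the Bismut and the Chern Ricci forms of an arbitrary Hermitian metric, used already in the proof of the Proposition above. The second is Koszul's computation \cite{K} that $\rho^{\rm Ch}=\frac{1}{2}{\rm i}\sum_{\alpha\in P}{\rm d}\alpha$ for every left-invariant $I$-Hermitian metric on $(G,I)$, independently of the metric. The third, which is the substantive one, is the value of $\delta F$: I would apply Gauduchon's formula \cite{Gau}, $I\circ\delta F(X)=\frac{1}{2}\sum_{l=1}^{n}{\rm d}F(e_l,Ie_l,X)$, with the $I$-adapted $g$-orthonormal basis built from the normalizations of $X_{\alpha},Y_{\alpha}$ (one pair per $\alpha\in P$) together with an $I$-adapted orthonormal basis $(H_k)$ of $\mathfrak t$, the Chevalley--Eilenberg identity ${\rm d}F(X,Y,Z)=F([X,Y],Z)+F([Y,Z],X)+F([Z,X],Y)$ for left-invariant $2$-forms, and the structure relations $[E_{\alpha},E_{-\alpha}]=H_{\alpha}$, $B(H,H_{\alpha})=\alpha(H)$, $g=-B(\Lambda\,\cdot,\cdot)$. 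Substituting the second and third ingredients into the first yields the formula for $\rho^{\rm B}$, hence the equivalence $\rho^{\rm B}=0 \Leftrightarrow \sum_{\alpha\in P}\big({\rm d}\alpha-\frac{1}{\lambda_{\alpha}}{\rm d}(\alpha\circ\Lambda)\big)=0$.

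The step requiring care is the computation of $\delta F$. I would argue as follows. (a) $\delta F$ is left-invariant and right $T$-invariant (since $F$, $g$, and hence the Hodge operator $\star_{g}$ are), so as an element of $\mathfrak g^{*}$ it is ${\rm Ad}(T)$-invariant, and because ${\rm Ad}(T)$ has no nonzero fixed vector on any real root plane $\langle X_{\alpha},Y_{\alpha}\rangle$, we get $\delta F\in\mathfrak t^{*}$; it is therefore enough to evaluate on $Z\in\mathfrak h$, which is also what makes every term in which a single root vector is paired with a root vector of a different root drop out. (b) In $\sum_{l}{\rm d}F(e_l,Ie_l,Z)$ the Cartan pairs $(H_{2l-1},H_{2l})$ contribute nothing, because $\mathfrak t$ is abelian and $[H,Z]=0$ for $H,Z\in\mathfrak h$, so the sum collapses to $\sum_{\alpha\in P}\frac{1}{\lambda_{\alpha}}{\rm d}F(X_{\alpha},Y_{\alpha},Z)$. (c) For each $\alpha$, expressing $X_{\alpha},Y_{\alpha}$ through $E_{\pm\alpha}$ turns this term into $\frac{{\rm i}}{\lambda_{\alpha}}{\rm d}F(E_{\alpha},E_{-\alpha},Z)$, and the Chevalley--Eilenberg identity gives ${\rm d}F(E_{\alpha},E_{-\alpha},Z)=F([E_{\alpha},E_{-\alpha}],Z)=F(H_{\alpha},Z)$, the remaining two terms being $\pm\alpha(Z)F(E_{\alpha},E_{-\alpha})$ and cancelling; finally $F(H_{\alpha},Z)=g(IH_{\alpha},Z)=-g(H_{\alpha},IZ)=B(\Lambda H_{\alpha},IZ)=\alpha(\Lambda IZ)$, by the $I$-invariance of $g$, the relation $g=-B(\Lambda\,\cdot,\cdot)$ with $\Lambda$ being $B$-symmetric, and $B(H,H_{\alpha})=\alpha(H)$. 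Collecting everything, $\delta F=-\frac{1}{2}{\rm i}\sum_{\alpha\in P}\frac{1}{\lambda_{\alpha}}(\alpha\circ\Lambda)$ on $\mathfrak t$, and inserting this into $\rho^{\rm B}=\rho^{\rm Ch}+{\rm d}(\delta F)$ gives the asserted formula and hence the theorem.
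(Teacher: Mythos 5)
Your proposal is correct and follows essentially the same route as the paper: the identity $\rho^{\rm B}=\rho^{\rm Ch}+{\rm d}(\delta F)$ together with Koszul's formula for $\rho^{\rm Ch}$, and the evaluation of $\delta F$ via Gauduchon's formula with the adapted basis $\big(X_{\alpha}/\sqrt{-\lambda_{\alpha}},\,Y_{\alpha}/\sqrt{-\lambda_{\alpha}},\,H_k\big)$ and the Chevalley--Eilenberg identity, yielding $\delta F=-\tfrac12{\rm i}\sum_{\alpha\in P}\tfrac{1}{\lambda_{\alpha}}(\alpha\circ\Lambda)$ exactly as in the text preceding the theorem. Your added justifications (that $\delta F\in\mathfrak t^{*}$ by ${\rm Ad}(T)$-invariance, and the explicit cancellation of the two non-Cartan terms in ${\rm d}F(E_{\alpha},E_{-\alpha},Z)$) are details the paper leaves implicit, but they are consistent with its computation.
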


\section{CYT and SKT invariant metrics}

As a consequence of Corollary \ref{th4.1bis} and Theorem \ref{th4.1}, we can get the explicit equations to determine all left-invariant and right $T$-invariant CYT (or SKT) metrics on $(G, I).$ In particular, we can prove that if $g$ is both SKT and CYT, then $g$ has to be Bismut flat.

\begin{Corollary} \label{coroBismutflat}
Let $G$ be a compact semi-simple Lie group with Samelson's complex structure~$I$ determined by the set $P$ of positive roots associated to a maximal torus $T$ of $G$. Let $g$ be a~left-invariant $I$-Hermitian metric on $G$, which is also invariant under the right action of the torus~$T$ in the Tits fibration $G\rightarrow G/T$. Let $\alpha_1, \dots,\alpha_r$ be the choice of simple roots and $p$ be the cardinality of $P$.
Then
\begin{enumerate}
 \item[$(i)$] $g$ is CYT if and only if for every $\alpha \in P$ there exist $p$ positive real numbers $\lambda_{\alpha} > 0$, $\alpha \in P$, satisfying the $r$ equations
\begin{equation}\label{CYTsystem}
\sum_{\alpha \in P} \Bigg( x_\alpha^i - \sum_{j=1}^r \frac{x_\alpha^j}{\lambda_{\alpha}} \Lambda^i_j \Bigg) =0, \qquad i = 1, \dots, r,
\end{equation}
where $x_{\alpha}^j$ denotes the coefficient of $\alpha$ with respect to the simple root $\alpha_j$ and $\Lambda_i^j$ are the entries of $\Lambda$ in the basis $(\alpha_1, \dots,\alpha_r)$.
 \item[$(ii)$] $g$ is SKT if and only if there exist $r$ real numbers $ \tilde N_j$ and $p$ positive real numbers $ \tilde \lambda_{\alpha} >0$, $\alpha \in P$, satisfying the $p$ equations
\begin{equation}\label{SKTsystem}
\tilde \lambda_{\alpha} - 1 - \sum_{j = 1}^r \tilde N_j B(H_{\alpha_j}, H_{\alpha})=0, \qquad \alpha \in P.
\end{equation}
 \item[$(iii)$] If $g$ is both SKT and CYT, then $g$ is Bismut flat.
 \end{enumerate}
\end{Corollary}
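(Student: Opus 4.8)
The plan is to obtain the two systems \eqref{CYTsystem}, \eqref{SKTsystem} as coordinate versions of the intrinsic criteria already established (Theorem~\ref{th4.1} and Corollary~\ref{th4.1bis}), and then to superimpose them. For part~(i), I would start from Theorem~\ref{th4.1}: $g$ is CYT iff $\sum_{\alpha\in P}\bigl({\rm d}\alpha-\tfrac1{\lambda_\alpha}{\rm d}(\alpha\circ\Lambda)\bigr)=0$. Expanding $\alpha=\sum_j x_\alpha^j\alpha_j$ gives ${\rm d}\alpha=\sum_j x_\alpha^j\,{\rm d}\alpha_j$, and writing $\Lambda$ through its matrix $(\Lambda^i_j)$ in the basis $(\alpha_1,\dots,\alpha_r)$ gives ${\rm d}(\alpha\circ\Lambda)=\sum_i\bigl(\sum_j x_\alpha^j\Lambda^i_j\bigr){\rm d}\alpha_i$. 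Since the classes ${\rm d}\alpha_1,\dots,{\rm d}\alpha_r$ form a basis of $H^2(G/T,\mathbb R)$ (as recalled in the proof of Theorem~\ref{lemma2}), they are linearly independent as left-invariant $2$-forms, so the sum vanishes iff the coefficient of each ${\rm d}\alpha_i$ does, which is exactly \eqref{CYTsystem}. For part~(ii), I would use Corollary~\ref{th4.1bis}: $g$ is SKT iff $\Lambda|_{\mathfrak t}=\sum_i\nu_i{\rm Id}|_{\mathfrak t_i}$ and $\sum_{i}\sum_{\alpha\in P_i}(\lambda_\alpha^{(i)}-\nu_i)\,e_\alpha^{(i)}\wedge Ie_\alpha^{(i)}$ lies in the span of the $\omega_j={\rm d}\alpha_j$. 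The one computation needed is that, for a left-invariant $1$-form $\alpha\in\mathfrak t^*$, the relation $[X_\beta,Y_\beta]={\rm i}H_\beta$ yields
\[
{\rm i}\,{\rm d}\alpha=\sum_{\beta\in P}B(H_\alpha,H_\beta)\,e_\beta\wedge Ie_\beta ,
\]
so, by linear independence of the $2$-forms $e_\beta\wedge Ie_\beta$ ($\beta\in P$), membership of $\sum(\lambda_\alpha-\nu_i)e_\alpha\wedge Ie_\alpha$ in $\operatorname{span}\{\omega_j\}$ is equivalent to $\lambda_\alpha-\nu_i=\sum_j N_j B(H_{\alpha_j},H_\alpha)$ for some constants $N_j$ and all $\alpha\in P_i$. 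Rescaling each simple factor so that $\nu_i=1$ (setting $\tilde\lambda_\alpha=\lambda_\alpha/\nu_i$, $\tilde N_j=N_j/\nu_i$) gives \eqref{SKTsystem}; conversely a positive solution of \eqref{SKTsystem} produces a metric with $\Lambda|_{\mathfrak t}={\rm Id}$ whose fundamental form differs from a bi-invariant one by a closed form, hence is SKT.

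For part~(iii), suppose $g$ is both SKT and CYT. By~(ii) we may rescale each simple factor so that $\Lambda|_{\mathfrak t}={\rm Id}$ and $\lambda_\alpha-1=B(H_N,H_\alpha)$ with $H_N:=\sum_j\tilde N_jH_{\alpha_j}$; this rescaling preserves the CYT condition, which is unchanged under independent positive scalings of the simple factors. With $\Lambda^i_j=\delta^i_j$ the CYT system \eqref{CYTsystem} reads $\sum_{\alpha\in P}x_\alpha^i\bigl(1-\tfrac1{\lambda_\alpha}\bigr)=0$ for $i=1,\dots,r$; multiplying the $i$-th equation by $\alpha_i$ and summing identifies this with $\sum_{\alpha\in P}\bigl(1-\tfrac1{\lambda_\alpha}\bigr)\alpha=0$ in $\mathfrak t^*$, equivalently
\[
\sum_{\alpha\in P}\Bigl(1-\frac1{\lambda_\alpha}\Bigr)H_\alpha=0
\]
after applying the isomorphism $\mathfrak t_{\mathbb R}^*\to\mathfrak t_{\mathbb R}$ induced by $B$. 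Pairing this with $H_N$ through $B$ and substituting $1-\tfrac1{\lambda_\alpha}=\tfrac{\lambda_\alpha-1}{\lambda_\alpha}=\tfrac{B(H_N,H_\alpha)}{\lambda_\alpha}$ gives
\[
0=\sum_{\alpha\in P}\Bigl(1-\frac1{\lambda_\alpha}\Bigr)B(H_\alpha,H_N)=\sum_{\alpha\in P}\frac{B(H_N,H_\alpha)^2}{\lambda_\alpha}.
\]
Since $B$ is positive definite on $\mathfrak t_{\mathbb R}=\operatorname{span}_{\mathbb R}\langle H_\alpha\rangle$ and each $\lambda_\alpha>0$, every summand is nonnegative, so $B(H_N,H_\alpha)=0$ for all $\alpha\in P$ and therefore $\lambda_\alpha=1$ for all $\alpha$. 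Together with $\Lambda|_{\mathfrak t}={\rm Id}$ this means the rescaled metric restricts to $-B|_{\mathfrak t_i}$ on each $\mathfrak t_i$ and has $\lambda_\alpha=1$ for every $\alpha$, i.e.\ it coincides with $-B$ on each simple factor; hence $g$ is proportional to the Killing form on each factor. Such a metric is $I$-compatible by Theorem~\ref{lemma2}, and a bi-invariant Hermitian metric on a compact Lie group has flat Bismut connection (as recalled in the introduction), so $g$ is Bismut flat.

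The step I expect to be the main obstacle is the reduction, in~(iii), from the CYT relation — a single linear dependence among the highly dependent $2$-forms ${\rm d}\alpha$ — to the genuine vector identity $\sum_{\alpha\in P}(1-\lambda_\alpha^{-1})H_\alpha=0$; this rests on $\sum_i x_\alpha^i\alpha_i=\alpha$ together with the nondegeneracy of $B$ on $\mathfrak t_{\mathbb R}$, and on having the SKT normalization $\Lambda|_{\mathfrak t}={\rm Id}$ available so that the CYT equation collapses in the first place. Once this identity is in hand, inserting the SKT relation $\lambda_\alpha-1=B(H_N,H_\alpha)$ turns everything into a sum of squares with positive weights $1/\lambda_\alpha$, which rigidly forces $\lambda_\alpha\equiv1$ and hence bi-invariance.
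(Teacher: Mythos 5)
Your proofs of parts~(i) and~(ii) follow the paper's route essentially verbatim: expand the identity of Theorem~\ref{th4.1} in the basis $({\rm d}\alpha_i)$ and use their independence, and translate the condition of Corollary~\ref{th4.1bis} via ${\rm d}\alpha_j=\sum_{\alpha}B(H_{\alpha_j},H_\alpha)\,e_\alpha\wedge Ie_\alpha$ together with the rescaling by the $\nu_i$. For part~(iii), however, you take a genuinely different and in fact shorter route. The paper keeps the $r$ rescaled CYT equations as a homogeneous linear system $M\tilde N=0$ in $\tilde N_1,\dots,\tilde N_r$ and proves $M$ is invertible by factoring it through the weighted Gram matrix $L$ with entries $\sum_j x^i_{\alpha_j}\tilde\lambda_j^{-1}x^l_{\alpha_j}$ (positive definite because the rows of $\big(x^i_\alpha\big)$ are independent and $\tilde\lambda_j>0$) and the invertible matrix $\big(B(H_{\alpha_l},H_{\alpha_k})\big)$, concluding $\tilde N=0$ and then $\tilde\lambda_j=1$. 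You instead contract the $r$ equations against the simple roots to get the single identity $\sum_{\alpha\in P}\big(1-\tilde\lambda_\alpha^{-1}\big)H_\alpha=0$, pair it with $H_N=\sum_j\tilde N_jH_{\alpha_j}$ and insert the SKT relation to obtain $\sum_{\alpha\in P}B(H_N,H_\alpha)^2/\tilde\lambda_\alpha=0$, a sum of nonnegative terms; this forces $B(H_N,H_\alpha)=0$ and hence $\tilde\lambda_\alpha=1$ for every $\alpha$ directly. In effect you evaluate the quadratic form $\tilde N^{t}M\tilde N$ rather than proving $M$ invertible: the same positivity inputs are used (weights $\tilde\lambda_j^{-1}>0$, positive definiteness of $B$ on ${\rm span}_{\mathbb R}\langle H_\alpha\rangle$), but your version dispenses with the matrix bookkeeping and reaches $\tilde\lambda_\alpha\equiv 1$ without first isolating $\tilde N=0$. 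The only cosmetic discrepancy is the factor of ${\rm i}$ in your formula for ${\rm d}\alpha$ relative to the paper's; this is a normalization issue (the roots are imaginary-valued on $\mathfrak t$) that is absorbed into the undetermined constants $N_j$ and affects neither system \eqref{SKTsystem} nor the argument.
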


\begin{proof}
From Theorem \ref{th4.1}, it follows that $g$ is CYT if and only if there exist $p$ positive real number $\lambda_{\alpha}$, $\alpha \in P,$ such that
\[
\sum_{\alpha \in P}  \bigg ({\rm d}\alpha - \frac{1}{\lambda_{\alpha} } {\rm d}( \alpha \circ \Lambda ) \bigg) =0.
\]
Therefore, since $\alpha =\sum_{i = 1}^r x_\alpha^i \alpha_i$, with $x_\alpha^i$ positive integers and $\Lambda_i^j$ the entries of $\Lambda$,
 we have that the CYT condition is equivalent to
\[
\sum_{\alpha \in P} \Bigg( x_\alpha^i - \sum_{j=1}^r \frac{x_\alpha^j}{\lambda_{\alpha}} \Lambda^i_j \Bigg) {\rm d}\alpha_i = 0.
\]
Since the forms ${\rm d} \alpha_i$, $i= 1, \dots, r$, are linearly independent, we get the non-linear system \eqref{CYTsystem} of~$r$ equations in the $p$ variables $\lambda_{\alpha}$, $\alpha \in P$, and (i)~follows.

Let denote by $P_i$ the set of positive roots of the simple factor $G_i$ with respect to $\mathfrak t_i$.
To prove the second part of the corollary we use that by Corollary \ref{th4.1bis} the SKT condition is equivalent to the existence of $r$ real numbers $N_i$, $i = 1, \dots, r$ and $p$ positive numbers $\lambda_{\alpha}$, $\alpha \in P$, such that
\[
\sum_{i = 1}^m \sum_{\alpha \in P_i} \big(\lambda_{\alpha}^{(i)} - \nu_i\big) e_{\alpha}^{(i)} \wedge I e_{\alpha}^{(i)} = \sum_{i = 1}^r N_i \, {\rm d}\alpha_i
\]
and $\Lambda = \sum_{i =1}^m \nu_i {\rm Id} |_{\mathfrak t_i}$.
Since
\[
{\rm d}\alpha_j= \sum_{i = 1}^m \sum_{\alpha \in P_i} B\big(H_{\alpha_j}, H_{\alpha}^{(i)}\big) e_{\alpha} ^{(i)} \wedge I e_{\alpha}^{(i)}, \qquad j =1, \dots, r,
\]
we obtain
\[
\sum_{i =1}^m \sum_{\alpha \in P_i} \big(\lambda_{\alpha}^{(i)} - \nu_i\big) e_{\alpha}^{(i)} \wedge I e_{\alpha} ^{(i)}= \sum_{i =1}^m \sum_{j = 1}^r N_j \sum_{\alpha \in P_i} B\big(H_{\alpha_j}, H_{\alpha}^{(i)}\big) e_{\alpha}^{(i)} \wedge I e_{\alpha}^{(i)}.
\]
Therefore, using that the previous equality holds on every simple factor $\mathfrak g_i$ and rescaling $\lambda_{\alpha}^{(i)}$ and $N_i$ by $\nu_i$ we get the linear system \eqref{SKTsystem} of $p$ equations in the $p + r$ variables $\tilde N_i$, $i = 1, \dots, r,$ and $ \tilde \lambda_{\alpha}$, $\alpha \in P.$

Now to prove the last part of the theorem suppose that $g$ is both CYT and SKT. Then using that $\Lambda = \sum_{i =1}^m \nu_i {\rm Id} |_{\mathfrak t_i}$ and rescaling $\lambda_{\alpha}$ in \eqref{CYTsystem} in the same way as for the SKT case we have that the equations for the CYT condition become
\[
\sum_{\alpha \in P} x_\alpha^i\Bigg(1 - \sum_{j =1}^r \frac{1}{\tilde \lambda_{\alpha}} \Bigg) = 0, \qquad i = 1, \dots ,r.
\]
Let
\[
P = \{ \alpha_1, \dots, \alpha_r, \alpha_{r + 1}, \dots, \alpha_p \},
\]
where $\alpha_1, \dots, \alpha_r$ are the simple roots.
Imposing the SKT and CYT condition we get the following non-linear system of $p + r$ equations in the variables $\tilde \lambda_{j}$, $j =1, \dots, p$, $\tilde N_i$, $i = 1, \dots, r$,
\begin{equation}\label{newsystem}
 \begin{cases}
\displaystyle \tilde \lambda_{j}-1 - \sum_{i = 1}^r \tilde N_i B(H_{\alpha_i}, H_{\alpha_j})=0, \qquad j = 1, \dots, p,\\
\displaystyle \sum_{j=1}^p x_{\alpha_j}^i \bigg(1 - \frac{1}{\tilde \lambda_{j}} \bigg) =0, \qquad i = 1, \dots, r.
\end{cases}
\end{equation}
Note that the solution $\tilde N_i =0$, $i=1, \dots, r$, $\tilde \lambda_j = 1$, $j = 1, \dots, p$, corresponds to a Bismut flat metric.
Since $ \tilde \lambda_j \neq 0$, for every $j = 1, \dots, p$, if we multiply
\[
\sum_{j=1}^p x_{\alpha_j}^i \bigg(1 - \frac{1}{\tilde \lambda_{j}} \bigg) =0, \qquad i = 1, \dots, r,
\]
by the product of all the $\tilde \lambda_j$,we get
\[
\sum_{j=1}^p x_{\alpha_j}^i \tilde \lambda_1 \cdots \hat{\tilde {\lambda_j}} \cdots \tilde \lambda _p \big(\tilde \lambda_j - 1\big) =0, \qquad i = 1, \dots, r,
\]
where $\hat{ \tilde {\lambda_{j} }}$ denotes that the element $\tilde \lambda_{j}$ is removed in the product. We know that
\[
\tilde \lambda_{j}-1 = \sum_{k = 1}^r \tilde N_k B(H_{\alpha_k}, H_{\alpha_j})
\]
for every $j = 1, \dots, p$. Therefore
\[
\sum_{j=1}^p x_{\alpha_j}^i \tilde \lambda_1 \cdots \hat{\tilde {\lambda_j}} \cdots \tilde \lambda _p \Bigg (\sum_{k = 1}^r \tilde N_k B(H_{\alpha_k}, H_{\alpha_j})\Bigg)=0, \qquad i = 1, \dots, r,
\]
 which can be viewed as a homogeneous system of $r$ equations in the $r$ variables $\tilde N_i$, $i = 1, \dots, r$. The $r \times r$ associated matrix $M = M\big(\tilde \lambda_1, \dots, \tilde \lambda_p\big)$ has $(i,k)$-th entry given by
\[
\sum_{j=1}^p x_{\alpha_j}^i \tilde \lambda_1 \cdots \hat{\tilde {\lambda_j}} \cdots \tilde \lambda _p B(H_{\alpha_k}, H_{\alpha_j}).
\]
We can rewrite $B(H_{\alpha_k}, H_{\alpha_j})$, $k =1, \dots, r$, $j = 1, \dots, p,$ in terms of $B(H_{\alpha_k}, H_{\alpha_j})$ with $k, j \in \{1, \dots, r \}$, using
 that $B (H, H_{\alpha}) = \alpha (H)$ for every $H \in \mathfrak t$. Indeed, if $\alpha = \sum_{l=1}^r x_{\alpha}^l \alpha_l$ we have,
\[
B (H_{\alpha_i}, H_{\alpha}) = \alpha (H_{\alpha_i}) =
 \sum_{l=1}^r x_{\alpha}^l \alpha_l (H_{\alpha_i}) = \sum_{l=1}^r x_{\alpha}^l B(H_{\alpha_i}, H_{\alpha_l}),
\]
 for every $i = 1, \dots, r$.
 As consequence the $(i,k)$-th entry of the matrix $M = M\big(\tilde \lambda_1, \dots, \tilde \lambda_p\big)$ can be written as
\begin{gather*}
\sum_{j=1}^p x_{\alpha_j}^i \tilde \lambda_1 \cdots \hat{\tilde {\lambda_j}} \cdots \tilde \lambda _p B(H_{\alpha_k}, H_{\alpha_j}) = \sum_{j=1}^p x_{\alpha_j}^i \tilde \lambda_1 \cdots \hat{\tilde {\lambda_j}} \cdots \tilde \lambda _p B(H_{\alpha_j}, H_{\alpha_k})\\
\hphantom{\sum_{j=1}^p x_{\alpha_j}^i \tilde \lambda_1 \cdots \hat{\tilde {\lambda_j}} \cdots \tilde \lambda _p B(H_{\alpha_k}, H_{\alpha_j})}{} = \sum_{j=1}^p x_{\alpha_j}^i \tilde \lambda_1 \cdots \hat{\tilde {\lambda_j}} \cdots \tilde \lambda _p \sum_{l=1}^r x_{\alpha_j}^l B(H_{\alpha_l}, H_{\alpha_k})\\
 \hphantom{\sum_{j=1}^p x_{\alpha_j}^i \tilde \lambda_1 \cdots \hat{\tilde {\lambda_j}} \cdots \tilde \lambda _p B(H_{\alpha_k}, H_{\alpha_j})}{} = B\Bigg( \sum_{l=1}^r \sum_{j=1}^p x_{\alpha_j}^i \tilde \lambda_1 \cdots \hat{\tilde {\lambda_j}} \cdots \tilde \lambda _p x_{\alpha_j}^l H_{\alpha_l}, H_{\alpha_k}\Bigg),
\end{gather*}
where $i , k = 1, \dots, r$. Since
\[
 \sum_{j=1}^p x_{\alpha_j}^i \tilde \lambda_1 \cdots \hat{\tilde {\lambda_j}} \cdots \tilde \lambda _p x_{\alpha_j}^l = \tilde \lambda_1 \cdots \tilde \lambda_p \sum_{j=1}^p x_{\alpha_j}^i \frac{1} { \tilde \lambda_j} x_{\alpha_j}^l,
\]
 it will be sufficient to show, for every $ \tilde \lambda_j$, the invertibility of the matrix $L$, whose $(i,l)$-entry is given
\[
 \sum_{j=1}^p x_{\alpha_j}^i \tilde \lambda_1 \cdots \hat{\tilde {\lambda_j}} \cdots \tilde \lambda _p x_{\alpha_j}^l = \tilde \lambda_1 \cdots \tilde \lambda_p \sum_{j=1}^p x_{\alpha_j}^i \frac{1} {\tilde \lambda_j} x_{\alpha_j}^l.
\]
Since the $r \times p$ matrix $(x_{\alpha}^i)$ has rank $r$, the $r$ vectors
\[
v_1 = \big(x^1_{\alpha_1}, \dots, x^1_{\alpha_p}\big), \quad \dots, \quad v_r = \big(x^r_{\alpha_1}, \dots, x^r_{\alpha_p}\big),
\]
are linearly independent. Using that for every $j=1, \dots, p,$ $ \tilde \lambda_j > 0$, we get that $\sum_{j=1}^p \frac{1} { \tilde \lambda_j} x_{\alpha_j}^i x_{\alpha_j}^l $ can be viewed as a positive definite scalar product on $\mathbb R^p$ with weights $\frac{1}{ \tilde \lambda_j}$ of the two vectors~$v_i$ and~$v_l$. Therefore,
the matrix $L$ is invertible and $\tilde N_i =0$, for every $i = 1, \dots, r$. By \eqref{newsystem}, it follows that $\tilde \lambda_j = 1$, for every $j =1, \dots, p$.
\end{proof}

\subsection*{Acknowledgements}

Anna Fino is partially supported by Project PRIN 2017 ``Real and complex manifolds: Topology, Geometry and Holomorphic Dynamics'', by GNSAGA (Indam) and by a grant from the Simons Foundation (\#944448). Gueo
Grantcharov is partially supported by a grant from the Simons
Foundation (\#853269). We would like to thank the anonymous referees for the helpful comments and suggestions as well as pointing out a confusing statement in the earlier version of the proof of Theorem~\ref{lemma2}.

%\cite{Gau84,Swann,Tanre}

\pdfbookmark[1]{References}{ref}
\LastPageEnding


\begin{thebibliography}{99}
\footnotesize\itemsep=0pt

\bibitem{AlP}
Alekseevskii D.V., Perelomov A.M., Invariant {K}\"ahler--{E}instein metrics on
 compact homogeneous spaces, \href{https://doi.org/10.1007/BF01078469}{\textit{Funct. Anal. Appl.}} \textbf{20} (1986),
 171--182.

\bibitem{AFSU}
Apostolov V., Fu X., Streets J., Ustinovskiy Yu., The generalized K\"ahler
 Calabi--Yau problem, \href{https://arxiv.org/abs/2211.09104}{arXiv:2211.09104}.

\bibitem{Arv}
Arvanitoyeorgos A., An introduction to {L}ie groups and the geometry of
 homogeneous spaces, \textit{Stud. Math. Libr.}, Vol.~22, \href{https://doi.org/10.1090/stml/022}{Amer. Math. Soc.},
 Providence, RI, 2003.

\bibitem{Barbaro}
Barbaro G., Global stability of the {P}luriclosed flow on compact
 simply-connected simple {L}ie groups of rank two, \href{https://doi.org/10.1007/s00031-022-09761-5}{\textit{Transform. Groups}}
 (2022),, \href{https://arxiv.org/abs/2202.13199}{arXiv:2202.13199}.

\bibitem{BFR}
Bordemann M., Forger M., R\"omer H., Homogeneous {K}\"ahler manifolds: paving
 the way towards new supersymmetric sigma models, \href{https://doi.org/10.1007/BF01221650}{\textit{Comm. Math. Phys.}}
 \textbf{102} (1986), 605--647.

\bibitem{Chevalley}
Chevalley C., Invariants of finite groups generated by reflections,
 \href{https://doi.org/10.2307/2372597}{\textit{Amer.~J.~Math.}} \textbf{77} (1955), 778--782.

\bibitem{FG}
Fino A., Grantcharov G., Properties of manifolds with skew-symmetric torsion
 and special holonomy, \href{https://doi.org/10.1016/j.aim.2003.10.009}{\textit{Adv. Math.}} \textbf{189} (2004), 439--450,
 \href{https://arxiv.org/abs/math.DG/0302358}{arXiv:math.DG/0302358}.

\bibitem{FGV}
Fino A., Grantcharov G., Vezzoni L., Astheno--{K}\"ahler and balanced
 structures on fibrations, \href{https://doi.org/10.1093/imrn/rnx337}{\textit{Int. Math. Res. Not.}} \textbf{2019} (2019),
 7093--7117, \href{https://arxiv.org/abs/1608.06743}{arXiv:1608.06743}.

\bibitem{GJS}
Garcia-Fernandez M., Jordan J., Streets J., Non-{K}\"ahler {C}alabi--{Y}au
 geometry and pluriclosed flow, \href{https://arxiv.org/abs/2106.13716}{arXiv:2106.13716}.

\bibitem{GS}
Garcia-Fernandez M., Streets J., Generalized {R}icci flow, \textit{Univ.
 Lecture Ser.}, Vol.~76, \href{https://doi.org/10.1090/ulect/076}{Amer. Math. Soc.}, Providence, RI, 2021.

\bibitem{GHR}
Gates Jr. S.J., Hull C.M., Ro\v{c}ek M., Twisted multiplets and new
 supersymmetric nonlinear {$\sigma$}-models, \href{https://doi.org/10.1016/0550-3213(84)90592-3}{\textit{Nuclear Phys.~B}}
 \textbf{248} (1984), 157--186.

\bibitem{Gau}
Gauduchon P., Hermitian connections and {D}irac operators, \textit{Boll. Un.
 Mat. Ital.~B} \textbf{11} (1997), 257--288.

\bibitem{GGP}
Grantcharov D., Grantcharov G., Poon Y.S., Calabi--{Y}au connections with
 torsion on toric bundles, \href{https://doi.org/10.4310/jdg/1197320602}{\textit{J.~Differential Geom.}} \textbf{78} (2008),
 13--32, \href{https://arxiv.org/abs/math.DG/0306207}{arXiv:math.DG/0306207}.

\bibitem{Grantcharov}
Grantcharov G., Geometry of compact complex homogeneous spaces with vanishing
 first {C}hern class, \href{https://doi.org/10.1016/j.aim.2010.10.005}{\textit{Adv. Math.}} \textbf{226} (2011), 3136--3159,
 \href{https://arxiv.org/abs/0905.0040}{arXiv:0905.0040}.

\bibitem{GIP}
Gutowski J., Ivanov S., Papadopoulos G., Deformations of generalized
 calibrations and compact non-{K}\"ahler manifolds with vanishing first
 {C}hern class, \href{https://doi.org/10.4310/AJM.2003.v7.n1.a4}{\textit{Asian~J.~Math.}} \textbf{7} (2003), 39--79,
 \href{https://arxiv.org/abs/math.DG/0205012}{arXiv:math.DG/0205012}.

\bibitem{HP}
Howe P.S., Papadopoulos G., Further remarks on the geometry of two-dimensional
 nonlinear {$\sigma$} models, \href{https://doi.org/10.1088/0264-9381/5/12/014}{\textit{Classical Quantum Gravity}} \textbf{5}
 (1988), 1647--1661.

\bibitem{Hull}
Hull C.M., Compactifications of the heterotic superstring, \href{https://doi.org/10.1016/0370-2693(86)91393-6}{\textit{Phys.
 Lett.~B}} \textbf{178} (1986), 357--364.

\bibitem{K}
Koszul J.L., Sur la forme hermitienne canonique des espaces homog\`enes
 complexes, \href{https://doi.org/10.4153/CJM-1955-061-3}{\textit{Canadian~J.~Math.}} \textbf{7} (1955), 562--576.

\bibitem{LUV}
Latorre A., Ugarte L., Villacampa R., Fr\"olicher spectral sequence of compact
 complex manifolds with special {H}ermitian metrics, \href{https://arxiv.org/abs/2207.14669}{arXiv:2207.14669}.

\bibitem{Pittie}
Pittie H.V., The {D}olbeault-cohomology ring of a compact, even-dimensional
 {L}ie group, \href{https://doi.org/10.1007/BF02863632}{\textit{Proc. Indian Acad. Sci. Math. Sci.}} \textbf{98} (1988),
 117--152.

\bibitem{Pittie2}
Pittie H.V., The nondegeneration of the {H}odge--de {R}ham spectral sequence,
 \href{https://doi.org/10.1090/S0273-0979-1989-15683-8}{\textit{Bull. Amer. Math. Soc.~(N.S.)}} \textbf{20} (1989), 19--22.

\bibitem{PR}
Podest\'a F., Raffero A., Bismut {R}icci flat manifolds with symmetries,
 \href{https://doi.org/10.1017/prm.2022.49}{\textit{Proc. Roy. Soc. Edinburgh Sect.~A}}, {t}o appear, \href{https://arxiv.org/abs/2202.00417}{arXiv:2202.00417}.

\bibitem{PR2}
Podest\'a F., Raffero A., Infinite families of homogeneous {B}ismut {R}icci
 flat manifolds, \href{https://doi.org/10.1142/S0219199722500754}{\textit{Commun. Contemp. Math.}}, {t}o appear,
 \href{https://arxiv.org/abs/2205.12690}{arXiv:2205.12690}.

\bibitem{Samelson}
Samelson H., A class of complex-analytic manifolds, \textit{Portugal. Math.}
 \textbf{12} (1953), 129--132.

\bibitem{Strominger}
Strominger A., Superstrings with torsion, \href{https://doi.org/10.1016/0550-3213(86)90286-5}{\textit{Nuclear Phys.~B}} \textbf{274}
 (1986), 253--284.

\bibitem{Swann}
Swann A., Twisting {H}ermitian and hypercomplex geometries, \href{https://doi.org/10.1215/00127094-2010-059}{\textit{Duke
 Math.~J.}} \textbf{155} (2010), 403--431.

\bibitem{St}
von Steinkirch M., Introduction to group theory for physicists, {S}tate
 University of New York at Stony Brook, available at
 \url{http://www.astro.sunysb.edu/steinkirch/books/group.pdf}.

\bibitem{Wang}
Wang H.-C., Closed manifolds with homogeneous complex structure,
 \href{https://doi.org/10.2307/2372397}{\textit{Amer.~J.~Math.}} \textbf{76} (1954), 1--32.

\end{thebibliography}
\end{document}